\theoremstyle{definition}
\newtheorem{Def}{Definition}[section]
\newtheorem{Prop}[Def]{Proposition}
\newtheorem{Lem}[Def]{Lemma}
\newtheorem{Thm}[Def]{Theorem}
\newtheorem{Cor}[Def]{Corollary}
\newtheorem{Rem}[Def]{Remark}
\newtheorem{Ass}[Def]{Assumption}
\newcommand{\p}{\mathbb{P}}
\newcommand{\e}{\mathbb{E}}
\newcommand{\real}{\mathbb{R}}
\newcommand{\n}{\mathbb{N}}
\begin{document}
\title{$L^p(\Omega)$-Difference of One-Dimensional Stochastic Differential Equations with Discontinuous Drift \footnote{This research has supported by grants of the Japaneses government.}}
\author{Dai Taguchi \footnote{Ritsumeikan University, 1-1-1 Nojihigashi, Kusatsu, Shiga, 525-8577, Japan, email: dai.taguchi.dai@gmail.com }}
\date{}
\maketitle

\begin{abstract}
We consider a one-dimensional stochastic differential equations (SDE) with irregular coefficients.
The purpose of this paper is to estimate the $L^p(\Omega)$-difference of SDEs using the norm of the difference of coefficients, where the discontinuous drift coefficient satisfies a one-sided Lipschitz condition and the diffusion coefficient is bounded, uniformly elliptic and H\"older continuous.
As an application, we consider the stability problem.
\\\\
\textbf{2010 Mathematics Subject Classification}: 58K25; 41A25; 65C30 \\\\
\textbf{Keywords}: 
stochastic differential equation,
irregular coefficients,
stability problem,
Gaussian estimate	
\end{abstract}

\section{Introduction}

Let us consider a one-dimensional stochastic differential equation (SDE)
\begin{align}\label{SDE_1}
X_t=x_0+ \int_0^t b(X_s)ds + \int_0^t \sigma(X_s)dW_s,
\:x_0 \in \real  ,\: t \in [0,T],
\end{align}
where $W:=(W_t)_{0\leq t \leq T}$ is a standard one-dimensional Brownian motion on a probability space $(\Omega, \mathcal{F},\p)$ with a filtration $(\mathcal{F}_t)_{0\leq t \leq T}$ satisfying the usual conditions.
The drift coefficient $b$ and the diffusion coefficient $\sigma$ are Borel-measurable functions from $\real $ into $\real$.
The diffusion process $X:=(X_t)_{0 \leq t \leq T}$ is used in many fields of application, for example, mathematical finance, optimal control problem and filtering.

Let $X^{(n)}$ be a solution of the SDE (\ref{SDE_1}) with drift coefficient $b_n$ and diffusion coefficient $\sigma_n$.
Basically, the ``stability problem" is a convergence problem such that the sequence $(X^{(n)})_{n \in \n}$ tends to $X$ under the condition of convergence of the coefficients $(b_n, \sigma_n) \to (b, \sigma)$ in some sense.
Stroock and Varadhan in chapter $11$ of	\cite{SV} introduce the stability problem in the law sense in order to consider the martingale problem with continuous and locally bounded coefficients.
Kawabata and Yamada in \cite{KaYa} consider the strong convergence of the stability problem under the condition that the drift coefficients $b$ and $b_n$ are Lipschitz continuous function, diffusion coefficients $\sigma$ and $\sigma_n$ are H\"older continuous and $(b_n, \sigma_n)$ locally uniformly converge to $(b, \sigma)$ (see \cite{KaYa}, example $1$).
Kaneko and Nakao \cite{KaNa} prove that if the coefficients $b_n$ and $\sigma_n$ are uniformly bounded, $\sigma_n$ is uniformly elliptic and $(b_n, \sigma_n)$ tend to $(b, \sigma)$ in $L^1$ sense, then $X^{(n)}$ converge to $X$ in $L^2$ sense. Moreover they also prove that solution of SDE (\ref{SDE_1}) is constructed by the Euler-Maruyama scheme under the condition that coefficients $b$ and $\sigma$ are continuous and linear growth (see \cite{KaNa}, Theorem D).
Recently, under the Nakao-Le Gall condition, Hashimoto and Tsuchiya \cite{HaTsu} prove that $(X^{(n)})_{n \in \n}$ converges to $X$ in $L^p$ sense with $p \geq 1$ and give the rate of convergence under the condition that $b_n \to b$ and $\sigma_n \to \sigma$ in $L^1$ and $L^2$ sense, respectively.
They use the Yamada and Watanabe approximation technique which was introduced in \cite{Yamada} and some estimates for the local time.

On a related study, the convergence for the Euler-Maruyama scheme with non-Lipschitz coefficients have been studied recently.
Yan \cite{Ya} has proven that if the sets of discontinuous points of $b$ and $\sigma$ are countable, then the Euler-Maruyama scheme weakly convergence to the unique weak solution of SDE.
Kohatsu-Higa, Lejay and Yasuda \cite{KLY} have studied weak approximation error for a one-dimensional SDE with the drift ${\bf 1}_{(-\infty, 0]}(x) - {\bf 1}_{(0, +\infty)}(x)$ and constant diffusion. 
Gy\"ongy and R\'asonyi \cite{Gyongy} give the order of the rate of convergence for a one-dimensional SDE when the drift is the sum of a Lipschitz and a monotone decreasing H\"older continuous function and the diffusion coefficient is H\"older continuous.
In \cite{NT}, Ngo and Taguchi extend their results in some sense.
They prove that for multi-dimensional SDE, if the drift coefficient is a one-sided Lipschitz function and the diffusion coefficient is H\"older continuous, then the Euler-Maruyama scheme convergence in $L^p$ sense.
They also give the order of the rate of convergence.
The estimate of the density of the Euler-Maruyama scheme which is proved by Lemaire and Menozzi in \cite{Lemaire} plays a crucial role in their arguments.

The purpose of this paper is to estimate the expectation of difference of  SDEs using the norm of the difference of coefficients.
More precisely, for another one-dimensional SDE
\begin{align}\label{SDE_hat}
\hat{X}_t=x_0 + \int_0^t \hat{b}(\hat{X}_s)ds + \int_0^t \hat{\sigma}(\hat{X}_s)dW_s,
\end{align}
we will prove the following inequality:
\begin{align*}
\e[\sup_{0\leq t \leq T	}|X_t - \hat{X}_t|] \leq C(||b-\hat{b}||_1 \vee || \sigma - \hat{\sigma} ||_2^2)^{\eta-1/2},
\end{align*}
where $\eta$ is H\"older exponent of the diffusion coefficients, $C$ is a positive constant and $|| \cdot ||_p$ is a $L^p$-norm with respect to some measure which is defined in definition \ref{Def_p} and absolute continuous with respect to Lebesgue measure.
We will also estimate $\e[\sup_{0\leq t \leq T }|X_t - \hat{X}_t|^p]$ for any $p > 1$.
As an application of our main results, we can consider the strong rate of convergence for the stability problem (see section \ref{Application}). 
Meanwhile, in finance, we may apply main results to estimate error of so-called ``calibration".

To obtain our main results, we will use the fact that the density of the SDE
 is bounded above by a Gaussian type bound.
Using this estimate, we can consider a measure of the norm $||\cdot||_p$ where is absolute continuous with respect to Lebesgue measure.
Gaussian type estimate are well known if the coefficients $b$ and $\sigma$ are smooth enough (see, \cite{Ar}, \cite{St} or \cite{Lemaire}).
In this paper we will prove the Gaussian upper bound if the drift coefficient is bounded measurable and the diffusion coefficient is bounded, uniformly elliptic and H\"older continuous.
The idea of the proof is a ``Taylor-like expansion" of the density.
This expansion is also called the ``Parametrix method" which is a method to construct fundamental solutions for parabolic type partial differential equations (see \cite{Fr}).
Bally and Kohatsu-Higa \cite{BK} prove this expansion using a semigroup approach and obtain the density of the solution of SDE with bounded measurable drift coefficient and diffusion coefficient which is bounded, uniformly elliptic and H\"older continuous.

Finally, we note that SDEs with discontinuous drift coefficient have many applications such as mathematical finance \cite{AI, IIKO}, optimal control problems \cite{BSW} and see also \cite{CS, KR}.

This paper is divided as follows:
Section $2$ introduces the definition of class of function which includes the discontinuous functions and main results.
All the proofs are shown in Section $3$.
In Section $4$, we apply the main results to the stability problem.

\section{Main Results}

\subsection{Notations and Assumptions}

We first define the class of functions which includes discontinuous functions.

\begin{Def}\label{OSL_1} 
A function $f :\real  \rightarrow \real $ is called a \emph{one-sided Lipschitz function} if there exists a positive constant $L$ such that for any $x,y \in \real$,
\begin{align*}
(x-y)(f(x)- f(y)) \leq L|x-y|^2.
\end{align*}
Let $\mathcal{L}$ be the class of all one-sided Lipschitz functions.  
\end{Def} 

\begin{Rem}\label{OSL_3}\rm 
By the definition of the class $\mathcal{L}$, if $f, g \in \mathcal{L}$ and $\alpha \geq 0$, then $f + g$, $ \alpha f \in \mathcal{L}$. 
The one-sided Lipschitz property is closely related to the monotonicity condition.
Actually, a monotone decreasing function is a one-sided Lipschitz function.
Moreover, a Lipschitz continuous function is also a one-sided Lipschitz function.
\end{Rem} 

Now we give assumptions for the coefficients $b, \hat{b}, \sigma$ and $\hat{\sigma}$.

\begin{Ass} \label{Ass_1}\rm 
We assume that the coefficients $b, \hat{b}, \sigma$ and $\hat{\sigma}$ satisfy the following conditions:
\begin{itemize}
\item[$A$-(i)]:  $b \in \mathcal{L}$.
\item[$A$-(ii)]: $b$ and $\hat{b}$ are bounded measurable, i.e., there exists $K > 0$ such that 
\begin{align*} 
   \sup_{x \in \real  }\left(|b(x)| \vee |\hat{b}(x)|\right)
   \leq K.
\end{align*}
\item[$A$-(iii)]: $\sigma$ and $\hat{\sigma}$ are $\eta:= 1/2+\alpha$-H\"older continuous with $\alpha \in [0, 1/2]$, i.e., there exists $K > 0$ such that 
\begin{align*} 
  \sup_{x,y \in \real , x\neq y}\left( \frac{|\sigma(x)-\sigma(y)|}{|x-y|^{\eta}} \vee \frac{|\hat{\sigma}(x)-\hat{\sigma}(y)|}{|x-y|^{\eta}}  \right)
 \leq K.
\end{align*}
\item[$A$-(iv)]: $a= \sigma^2$ and $\hat{a}= \hat{\sigma}^2$ are bounded and uniformly elliptic, i.e., there exists $\lambda \geq 1$ such that for any $x \in \real$, 
\begin{align*}
 \lambda^{-1} \leq {a}(x) \leq \lambda
 \text{ and }
 \lambda^{-1} \leq \hat{a}(x) \leq \lambda.
\end{align*}
\item[$A$-($p$)]: For given $p\geq 1$,
\begin{align*}
  \varepsilon_p:=||b-\hat{b}||_p^{p} \vee ||\sigma-\hat{\sigma}||_{2p}^{2p} <1,
\end{align*}
where $|| \cdot ||_p$ is defined in Definition \ref{Def_p}. Moreover, if $\alpha=0$, 
\begin{align*}
  \frac{1}{\log(1/\varepsilon_p)} <1.
\end{align*}
\end{itemize}
\end{Ass} 

\begin{Def} \label{Def_p} 
Let $p \geq 1$. For bounded measurable function $f$, a norm $|| \cdot ||_p$ is defined by
\begin{align*} 
    ||f||_{p}:= \left( \int_{\real } |f(x)|^p e^{-\frac{|x-x_0|^2}{2(8\lambda)T}} dx \right)^{1/p} < \infty.
\end{align*}
\end{Def} 

\begin{Rem}
Assume that $A$-(ii), $A$-(iii) and $A$-(iv) hold. Then SDE (\ref{SDE_1}) and SDE (\ref{SDE_hat}) have unique strong solution (see \cite{Zv}).
\end{Rem}

\subsection{Main Theorems}

Throughout this paper, we use the positive constant $C$.
Unless explicitly stated otherwise, the constant $C$ depends only on $K,L,T,p,\alpha,\lambda$ and $x_0$.
Moreover the constant $C$ may change from line to line.

\begin{Thm} \label{Main_1}\rm 
Assume that Assumption \ref{Ass_1} with $p=1$ holds.
Then
\begin{align*}
\sup_{\tau \in \mathcal{T}}\e[|X_{\tau}-\hat{X}_{\tau}|] 
&\leq \left\{ \begin{array}{ll}
\displaystyle C\varepsilon_1^{2 \alpha/(2\alpha+1)} &\textit{if } \alpha \in (0, 1/2], \\
\displaystyle \frac{C}{\log (1/\varepsilon_1)} &\textit{if } \alpha = 0,
\end{array}\right.
\end{align*}
where $\mathcal{T}$ is the set of all stopping times $\tau \leq T$.
\end{Thm}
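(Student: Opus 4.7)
The plan is to apply It\^o's formula to a Yamada--Watanabe smoothing of $|\cdot|$ and invoke the Gaussian upper bound on the density of $\hat{X}_s$ established earlier in the paper (which is exactly what motivates the definition of the norm $\|\cdot\|_p$). Fix parameters $\delta>1$ and $\varepsilon>0$, and let $\phi_{\delta,\varepsilon}\in C^2(\real)$ be the standard even smoothing with $|x|-\varepsilon\le \phi_{\delta,\varepsilon}(x)\le |x|$, $|\phi'_{\delta,\varepsilon}|\le 1$, $\sign(\phi'_{\delta,\varepsilon}(x))=\sign(x)$, and $\phi''_{\delta,\varepsilon}\ge 0$ supported in $\{\varepsilon/\delta\le |x|\le \varepsilon\}$ with $\phi''_{\delta,\varepsilon}(x)\le 2/(|x|\log\delta)$. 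Since $\e|X_\tau-\hat{X}_\tau|\le \e\phi_{\delta,\varepsilon}(X_\tau-\hat{X}_\tau)+\varepsilon$, it suffices to estimate the right-hand side.

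Apply It\^o's formula to $\phi_{\delta,\varepsilon}(X_t-\hat{X}_t)$ and take expectation at a bounded stopping time $\tau\le T$; the stochastic integral is a true martingale since $\phi'_{\delta,\varepsilon}$, $\sigma$ and $\hat{\sigma}$ are all bounded. For the drift contribution I split $b(X_s)-\hat{b}(\hat{X}_s)=[b(X_s)-b(\hat{X}_s)]+[b(\hat{X}_s)-\hat{b}(\hat{X}_s)]$. For the first piece, since $\phi'_{\delta,\varepsilon}$ shares the sign of its argument and $|\phi'_{\delta,\varepsilon}|\le 1$, Assumption $A$-(i) yields $\phi'_{\delta,\varepsilon}(X_s-\hat{X}_s)(b(X_s)-b(\hat{X}_s))\le L|X_s-\hat{X}_s|\le L\phi_{\delta,\varepsilon}(X_s-\hat{X}_s)+L\varepsilon$, which feeds Gronwall. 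For the second piece, the Gaussian upper bound on the density of $\hat{X}_s$ combined with Definition \ref{Def_p} gives $\e|b(\hat{X}_s)-\hat{b}(\hat{X}_s)|\le Cs^{-1/2}\|b-\hat{b}\|_1$, integrating in $s$ to a total contribution of order $\|b-\hat{b}\|_1\le\varepsilon_1$.

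For the second-order term I use $(\sigma(X_s)-\hat{\sigma}(\hat{X}_s))^2\le 2(\sigma(X_s)-\sigma(\hat{X}_s))^2+2(\sigma(\hat{X}_s)-\hat{\sigma}(\hat{X}_s))^2$. The H\"older part is controlled by $|\sigma(X_s)-\sigma(\hat{X}_s)|^2\le K^2|X_s-\hat{X}_s|^{2\eta}$ together with the pointwise bound $\phi''_{\delta,\varepsilon}(x)|x|^{2\eta}\le 2|x|^{2\alpha}/\log\delta$ on the support of $\phi''_{\delta,\varepsilon}$, producing a deterministic contribution $CT\varepsilon^{2\alpha}/\log\delta$ when $\alpha>0$ and $CT/\log\delta$ when $\alpha=0$. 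The coefficient-difference part is estimated by the crude maximum $\phi''_{\delta,\varepsilon}\le 2\delta/(\varepsilon\log\delta)$ combined with $\e[(\sigma-\hat{\sigma})^2(\hat{X}_s)]\le Cs^{-1/2}\|\sigma-\hat{\sigma}\|_2^2$, producing a contribution of order $\delta\varepsilon_1/(\varepsilon\log\delta)$.

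Collecting these bounds and applying Gronwall's lemma yields
\begin{align*}
\sup_{\tau\in\mathcal{T}}\e|X_\tau-\hat{X}_\tau|
\le C\Bigl(\varepsilon+\varepsilon_1+\frac{\varepsilon^{2\alpha}}{\log\delta}+\frac{\delta\varepsilon_1}{\varepsilon\log\delta}\Bigr).
\end{align*}
When $\alpha\in(0,1/2]$ I fix $\delta=e$ and balance $\varepsilon^{2\alpha}$ against $\varepsilon_1/\varepsilon$ by taking $\varepsilon=\varepsilon_1^{1/(2\alpha+1)}$, producing the rate $\varepsilon_1^{2\alpha/(2\alpha+1)}$ (note $\varepsilon\le\varepsilon_1^{2\alpha/(2\alpha+1)}$ since $\varepsilon_1<1$ and $2\alpha\le 1$). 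When $\alpha=0$ I take $\delta=1/\varepsilon_1$ and $\varepsilon=\varepsilon_1$, which is admissible by Assumption $A$-$(1)$ and gives $1/\log(1/\varepsilon_1)$. The delicate step is the coefficient-difference part of the second-order term: because $\phi''_{\delta,\varepsilon}$ is evaluated on the joint increment $X_s-\hat{X}_s$ while $(\sigma-\hat{\sigma})^2$ can only be controlled through the marginal density of $\hat{X}_s$, I am forced to use the crude bound $\phi''\le 2\delta/(\varepsilon\log\delta)$, and the tension between this $\delta/\varepsilon$ factor and the H\"older bound is precisely what dictates the final exponent.
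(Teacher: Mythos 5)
Your approach is essentially the same as the paper's: Yamada--Watanabe smoothing $\phi_{\delta,\varepsilon}$, It\^o's formula, the drift split $b(X_s)-\hat b(\hat X_s)=[b(X_s)-b(\hat X_s)]+[b(\hat X_s)-\hat b(\hat X_s)]$ with the one-sided Lipschitz bound on the first piece, the diffusion split into a H\"older part and a coefficient-difference part, and the Gaussian density estimate (the paper's Proposition \ref{GB_n_1} via Lemma \ref{Key_esti}) for the coefficient-difference terms. The Gronwall bound you arrive at,
\begin{align*}
\sup_{\tau\in\mathcal{T}}\e|X_\tau-\hat X_\tau|\le C\Bigl(\varepsilon+\varepsilon_1+\frac{\varepsilon^{2\alpha}}{\log\delta}+\frac{\delta\varepsilon_1}{\varepsilon\log\delta}\Bigr),
\end{align*}
matches the paper's, and your choice $\delta=e$, $\varepsilon=\varepsilon_1^{1/(2\alpha+1)}$ for $\alpha\in(0,1/2]$ gives the stated rate.

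There is, however, a genuine error in the $\alpha=0$ case. You take $\delta=1/\varepsilon_1$ and $\varepsilon=\varepsilon_1$. Plugging these into the coefficient-difference term gives
\begin{align*}
\frac{\delta\varepsilon_1}{\varepsilon\log\delta}=\frac{(1/\varepsilon_1)\,\varepsilon_1}{\varepsilon_1\log(1/\varepsilon_1)}=\frac{1}{\varepsilon_1\log(1/\varepsilon_1)},
\end{align*}
which diverges as $\varepsilon_1\to 0$, so this parameter choice cannot yield the claimed bound $C/\log(1/\varepsilon_1)$. In fact, with $\varepsilon=\varepsilon_1$ the factor $\varepsilon_1$ in the numerator of the coefficient-difference term is exactly cancelled by $\varepsilon$, leaving $\delta/\log\delta$, which is never $O(1/\log(1/\varepsilon_1))$ for any choice of $\delta$: taking $\delta$ bounded leaves a constant, and taking $\delta\to\infty$ makes things worse. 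The outer cutoff $\varepsilon$ must be taken larger so that $\delta\varepsilon_1/\varepsilon$ can still vanish. The paper's choice is $\delta=\varepsilon_1^{-1/2}$ and $\kappa=1/\log(1/\varepsilon_1)$ (this is where Assumption $A$-$(1)$ that $1/\log(1/\varepsilon_1)<1$ is needed to ensure $\kappa\in(0,1)$), which gives $\delta\varepsilon_1/(\kappa\log\delta)=2\varepsilon_1^{1/2}$; since $u e^{-u/2}\le 2/e<1$ for $u=\log(1/\varepsilon_1)>1$, this is dominated by $1/\log(1/\varepsilon_1)$, and all four terms are then $O(1/\log(1/\varepsilon_1))$ as required.
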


\begin{Thm} \label{Main_2}\rm 
Assume that Assumption \ref{Ass_1} with $p=1$ holds.
Then
\begin{align*} 
\e[\sup_{0 \leq t \leq T}|X_{t} - \hat{X_{t}}|] 
&\leq \left\{ \begin{array}{ll}
\displaystyle C\varepsilon_1^ {\alpha} &\textit{if } \alpha \in (0, 1/2], \\
\displaystyle \frac{C}{\sqrt{\log(1/\varepsilon_1)}} &\textit{if } \alpha = 0.
\end{array}\right.
\end{align*}
\end{Thm}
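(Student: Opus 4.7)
The plan is to follow the Yamada--Watanabe regularisation employed for Theorem~\ref{Main_1}, but to interpose the Burkholder--Davis--Gundy (BDG) inequality so as to move the supremum inside the expectation. Setting $Y_t:=X_t-\hat X_t$ and denoting by $\phi_n$ the standard Yamada--Watanabe approximation of $|\cdot|$ (so that $|x|\le \phi_n(x)+a_{n-1}$, $|\phi_n'|\le 1$, and $\phi_n''(x)\le \tfrac{2}{n|x|}\mathbf{1}_{[a_n,a_{n-1}]}(|x|)$ with a sequence $a_n\downarrow 0$), It\^o's formula yields the decomposition $\phi_n(Y_t)=A_t+N_t$, where $A_t$ is the drift and $N_t:=\int_0^t\phi_n'(Y_s)(\sigma(X_s)-\hat\sigma(\hat X_s))\,dW_s$ is a martingale.

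For the finite-variation piece I would argue as in the proof of Theorem~\ref{Main_1}: the sign-compatibility of $\phi_n'$ with $Y_s$ together with the one-sided Lipschitz property of $b$ bounds $\phi_n'(Y_s)(b(X_s)-b(\hat X_s))$ from above by $L|Y_s|+2K\mathbf{1}_{|Y_s|\le a_{n-1}}$; the remaining term $\phi_n'(Y_s)(b(\hat X_s)-\hat b(\hat X_s))$ is dominated pointwise by $|(b-\hat b)(\hat X_s)|$, whose time-integrated expectation is of order $||b-\hat b||_1\le \varepsilon_1$ by the Gaussian upper bound on the density of $\hat X_s$; and the $\tfrac12\phi_n''(Y_s)(\sigma(X_s)-\hat\sigma(\hat X_s))^2$ term splits into a H\"older piece $\le K^2\phi_n''(Y_s)|Y_s|^{1+2\alpha}$ of integrated size $O(n^{-1}a_{n-1}^{2\alpha})$ (respectively $O(n^{-1})$ when $\alpha=0$), and a $(\sigma-\hat\sigma)^2(\hat X_s)$ piece again handled by the density bound. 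Because these upper bounds are non-negative, $A_t$ is dominated by a non-decreasing process $\bar A_t$, giving $\sup_{0\le t\le T}|Y_t|\le a_{n-1}+\bar A_T+\sup_{0\le t\le T}|N_t|$; invoking Theorem~\ref{Main_1} to control $\int_0^T\e|Y_s|\,ds\le CT\varepsilon_1^{2\alpha/(1+2\alpha)}$ then shows that $\e\bar A_T$ is already smaller than the target rate and so absorbs harmlessly.

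For the martingale part, BDG together with $|\phi_n'|\le 1$ and Jensen's inequality yields
\begin{align*}
\e\sup_{0\le t\le T}|N_t|\le C\Big(\e\int_0^T(\sigma(X_s)-\hat\sigma(\hat X_s))^2\,ds\Big)^{\!1/2},
\end{align*}
and then I would split $(\sigma(X_s)-\hat\sigma(\hat X_s))^2\le 2K^2|Y_s|^{1+2\alpha}+2(\sigma-\hat\sigma)^2(\hat X_s)$. The second piece contributes $O(||\sigma-\hat\sigma||_2^2)=O(\varepsilon_1)$ by the density bound, and the first demands the key estimate $\int_0^T\e|Y_s|^{1+2\alpha}\,ds=O(\varepsilon_1^{2\alpha})$, which is exactly what the BDG square root requires to produce the advertised rate $\varepsilon_1^\alpha$. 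This is the hard part: crude interpolation between $\sup_s\e|Y_s|\le C\varepsilon_1^{2\alpha/(1+2\alpha)}$ (from Theorem~\ref{Main_1}) and the routine moment bound $\sup_s\e|Y_s|^2\le C$ only delivers $\varepsilon_1^{4\alpha^2/(1+2\alpha)}$, which is strictly worse than $\varepsilon_1^{2\alpha}$ whenever $\alpha\in(0,1/2)$. The route I would take is to rerun the Yamada--Watanabe scheme of Theorem~\ref{Main_1} with the approximation built around $|x|^{1+2\alpha}$ in place of $|x|$, so that the H\"older exponent of the test function matches the diffusion contribution $|\sigma(X_s)-\sigma(\hat X_s)|^2\le K^2|Y_s|^{1+2\alpha}$ and the correction from $\phi_n''$ becomes benign. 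Combining the drift bound, the BDG bound, and this $L^{1+2\alpha}$-analogue of Theorem~\ref{Main_1}, and finally optimising over $n$ (which in the critical case $\alpha=0$ produces the $1/\sqrt{\log(1/\varepsilon_1)}$ gain, because the H\"older exponent collapses to $1$ and $\phi_n''$ contributes only $O(1/n)$) yields the asserted rates.
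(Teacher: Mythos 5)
Your overall scaffolding (Yamada--Watanabe regularisation, one-sided Lipschitz argument for the drift, Key Lemma for the $b-\hat{b}$ and $\sigma-\hat{\sigma}$ errors, BDG on the martingale part) matches the paper's, and you correctly isolate the diffusion contribution $\int_0^t|Y_s|^{1+2\alpha}\,ds$ as the obstruction. Where you diverge from the paper (Lemma~\ref{Mart_L1}) is the treatment of that term. You apply Jensen immediately after BDG to obtain $C\big(\int_0^T\e[(\sigma(X_s)-\hat{\sigma}(\hat{X}_s))^2]\,ds\big)^{1/2}$, which then forces you to prove $\int_0^T\e[|Y_s|^{1+2\alpha}]\,ds=O(\varepsilon_1^{2\alpha})$, an $L^{1+2\alpha}$-moment estimate you do not have. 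The paper does not apply Jensen at this point; instead it keeps $\e\big[\big(\int_0^t|Y_s|^{1+2\alpha}ds\big)^{1/2}\big]$, factors out the supremum via $\int_0^t|Y_s|^{1+2\alpha}ds\le V_t\int_0^t|Y_s|^{2\alpha}ds$, and then uses Young's inequality so that $\tfrac{1}{2}\e[V_t]$ is reabsorbed into the left side of the Gronwall inequality, leaving only $\int_0^T\e[|Y_s|^{2\alpha}]\,ds$ to estimate -- a strictly lower power handled by Jensen and Theorem~\ref{Main_1}. This is a genuinely different, more efficient decomposition; it is the ingredient you are missing, and it removes the need for the hard $L^{1+2\alpha}$-estimate.

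That said, your suspicion that the naive reduction via Theorem~\ref{Main_1} cannot deliver the claimed exponent is well founded, and the paper's own Lemma~\ref{Mart_L1} is in fact affected by essentially this issue: the Jensen step there gives $\big(\int_0^T\e[|Y_s|]\,ds\big)^{2\alpha}\le C\varepsilon_1^{4\alpha^2/(2\alpha+1)}$, while the displayed inequality \eqref{esti_mart_sig_3} asserts $C\varepsilon_1^{2\alpha/(2\alpha+1)}$; since $4\alpha^2/(2\alpha+1)<2\alpha/(2\alpha+1)$ for $\alpha\in(0,1/2)$ and $\varepsilon_1<1$, the claimed bound does not follow, and the argument as written only yields $\e[V_T]\le C\varepsilon_1^{4\alpha^2/(2\alpha+1)}$ rather than $C\varepsilon_1^\alpha$. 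As for your proposed fix -- rerunning Yamada--Watanabe with a regularisation of $|x|^{1+2\alpha}$ -- this is not the paper's route and as stated is only a heuristic. The regulariser has $\phi''\sim|x|^{2\alpha-1}$, singular at the origin for $\alpha<1/2$; the quadratic-variation term $\phi''(Y_s)|Y_s|^{1+2\alpha}\sim|Y_s|^{4\alpha}$ is then a \emph{lower} power than $|Y_s|^{1+2\alpha}$, so it does not close under Gronwall as you suggest (it is not ``benign''); and the drift mismatch $|\phi'(Y_s)|\,|(b-\hat{b})(\hat{X}_s)|\sim|Y_s|^{2\alpha}|(b-\hat{b})(\hat{X}_s)|$ requires another Young/H\"older split whose exponents you have not tracked. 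Until those estimates are carried out and optimised, the sketch is an unverified claim rather than a proof.
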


\begin{Thm} \label{Main_3}\rm 
Let $p \geq 2$. Assume that Assumption \ref{Ass_1} with $p$ holds.
Then
\begin{align*} 
\e[\sup_{0 \leq t \leq T}|X_{t} - \hat{X_{t}}|^p] 
&\leq \left\{ \begin{array}{ll}
\displaystyle C \varepsilon_p^{1/2}. &\textit{if } \alpha = 1/2, \\
\displaystyle C\varepsilon_1^{2 \alpha/(2\alpha+1)} &\textit{if } \alpha \in (0, 1/2), \\
\displaystyle \frac{C}{\log(1/\varepsilon_1)} &\textit{if } \alpha = 0.
\end{array}\right.
\end{align*}
\end{Thm}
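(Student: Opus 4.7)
The plan is to split according to whether the diffusion is Lipschitz ($\alpha=1/2$) or strictly H\"older ($\alpha<1/2$), since only the first case is amenable to a direct It\^o expansion of $|Y_t|^p$ with $Y_t:=X_t-\hat X_t$.

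For $\alpha=1/2$, I would apply It\^o's formula to $|Y_t|^p$ (which is $C^2$ for $p\geq 2$) and split each difference as $b(X_s)-\hat b(\hat X_s)=[b(X_s)-b(\hat X_s)]+[b(\hat X_s)-\hat b(\hat X_s)]$, and likewise for $\sigma$. The diagonal contributions are controlled by the one-sided Lipschitz bound $Y_s(b(X_s)-b(\hat X_s))\leq L|Y_s|^2$ and by $|\sigma(X_s)-\sigma(\hat X_s)|^2\leq K^2|Y_s|^2$ (since $\eta=1$), both producing $|Y_s|^p$ terms that close under Gronwall. Young's inequality splits the off-diagonal error terms $|Y_s|^{p-1}|(b-\hat b)(\hat X_s)|$ and $|Y_s|^{p-2}|(\sigma-\hat\sigma)(\hat X_s)|^2$ into an $|Y_s|^p$ piece and pieces $|(b-\hat b)(\hat X_s)|^p$ and $|(\sigma-\hat\sigma)(\hat X_s)|^p$. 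After Burkholder-Davis-Gundy on the stochastic integral, the Gaussian upper bound on the density of $\hat X_s$ combined with $\int_0^T s^{-1/2}e^{-|x-x_0|^2/(cs)}\,ds\leq 2\sqrt{T}\,e^{-|x-x_0|^2/(cT)}$ (with $c=16\lambda$ matching Definition \ref{Def_p}) turns these time-integrated expectations into $C\|b-\hat b\|_p^p$ and $C\|\sigma-\hat\sigma\|_p^p$, so Gronwall yields $\e[\sup_t|Y_t|^p]\leq C(\|b-\hat b\|_p^p+\|\sigma-\hat\sigma\|_p^p)$. Cauchy-Schwarz on the finite weighted measure then gives $\|\sigma-\hat\sigma\|_p^p\leq C\|\sigma-\hat\sigma\|_{2p}^p=C(\|\sigma-\hat\sigma\|_{2p}^{2p})^{1/2}\leq C\varepsilon_p^{1/2}$, which (using $\varepsilon_p<1$) dominates $\|b-\hat b\|_p^p\leq\varepsilon_p$.

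For $\alpha\in[0,1/2)$, the direct approach fails because the quadratic variation contribution $|Y_s|^{p-2}\cdot K^2|Y_s|^{2\eta}=K^2|Y_s|^{p-1+2\alpha}$ has exponent strictly below $p$, and after $|Y|^q\leq 1+|Y|^p$ leaves a constant Gronwall cannot absorb. I would instead reduce to the previous theorems by truncating at the hitting time $\tau_M:=\inf\{t\leq T:|Y_t|\geq M\}\wedge T$ for a free parameter $M>0$ and decomposing $\e[\sup|Y|^p]=\e[\sup|Y|^p\mathbf{1}_{\tau_M=T}]+\e[\sup|Y|^p\mathbf{1}_{\tau_M<T}]$. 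On $\{\tau_M=T\}$ the bound $|Y_t|\leq M$ gives $\sup|Y|^p\leq M^{p-1}\sup|Y|$, which by Theorem \ref{Main_2} is $\leq CM^{p-1}\varepsilon_1^{\alpha}$ (or its logarithmic analogue when $\alpha=0$). On $\{\tau_M<T\}$, Cauchy-Schwarz together with the universal $2p$-th moment bound on $\sup|Y|$ (from boundedness of the coefficients) reduces the task to $\p(\tau_M<T)^{1/2}$, and Theorem \ref{Main_1} applied at the stopping time $\tau_M$ gives $\p(\tau_M<T)\leq\e|Y_{\tau_M}|/M\leq C\varepsilon_1^{2\alpha/(2\alpha+1)}/M$. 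Optimizing $M$ to balance the two pieces then yields the claimed rate, with the $\alpha=0$ endpoint handled by replacing polynomial factors with their logarithmic counterparts.

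The hard part will be recovering exactly the exponent $2\alpha/(2\alpha+1)$ in the H\"older case, since a naive balance of the two truncation pieces actually produces a strictly smaller exponent; achieving sharpness will likely require either a finer layer-cake integration exploiting the sub-Gaussian tails of $\sup|Y|$, or an $L^p$-upgrade of Theorem \ref{Main_1} obtained via a Yamada-Watanabe-type approximation of $|x|^p$ with second derivative tuned to the H\"older exponent, mimicking the optimization $\delta^{2\alpha}\sim\varepsilon_1$ that produced the rate at the $L^1$ level.
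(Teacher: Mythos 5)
Your $\alpha=1/2$ argument is essentially sound and a bit more elementary than the paper's: you bypass the Yamada--Watanabe regularization entirely and apply It\^o to $|y|^p$ directly, which is legitimate here because the one-sided Lipschitz bound on the drift pairs correctly with the sign of $p|y|^{p-2}y$ and the Lipschitz diffusion makes the quadratic-variation term close under Gronwall. The paper instead keeps the function $\phi_{\delta,\kappa}$ uniformly in $\alpha$ and only separates cases inside Lemma \ref{Mart_Lp}, where for $\alpha=1/2$ the BDG term factors as $V_t^{p/2}\big(\int_0^t|Y_s|ds\big)^{p/2}$; the conclusion $C\varepsilon_p^{1/2}$ is the same, and your Cauchy--Schwarz step $\|\sigma-\hat\sigma\|_p^p\leq C\|\sigma-\hat\sigma\|_{2p}^p\leq C\varepsilon_p^{1/2}$ is exactly what the paper obtains.

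For $\alpha\in[0,1/2)$, however, there is a genuine gap, which you candidly flag yourself: the two-piece truncation at $\tau_M$ cannot recover the exponent $2\alpha/(2\alpha+1)$ (nor the logarithmic rate at $\alpha=0$). The on-event piece costs $M^{p-1}\varepsilon_1^{\alpha}$ (from Theorem \ref{Main_2}) and the off-event piece costs $M^{-1/2}\varepsilon_1^{\alpha/(2\alpha+1)}$ (Chebyshev plus Theorem \ref{Main_1} plus bounded $2p$-th moments); balancing $M$ yields an exponent strictly below $2\alpha/(2\alpha+1)$ for every $p\geq 2$. The paper's missing ingredient is not a finer Yamada--Watanabe function but the Gronwall-type inequality of Gy\"ongy and R\'asonyi (the paper's Lemma \ref{Lem2_1}): after the Yamada--Watanabe It\^o step and Lemma \ref{Mart_Lp}, one is left with an inequality of the form
\begin{align*}
\e[V_t^p]\ \leq\ C\,\e\!\left[\Big(\int_0^t V_s\,ds\Big)^p\right]+C\,\e\!\left[\Big(\int_0^t |Y_s|^{1+2\alpha}\,ds\Big)^{p/2}\right]+\xi,
\end{align*}
and Lemma \ref{Lem2_1} with $r=p$, $q=2$, $\rho=1+2\alpha$ (applicable since $p\geq 2=q$) converts this directly into $\e[V_T^p]\leq C\xi+C\int_0^T\e[|Y_s|]\,ds$. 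Theorem \ref{Main_1} then bounds $\int_0^T\e[|Y_s|]\,ds$ by $C\varepsilon_1^{2\alpha/(2\alpha+1)}$ (resp.\ $C/\log(1/\varepsilon_1)$), and the optimization of $(\delta,\kappa)$ inside $\xi$ closes the argument. This is the ``$L^p$-upgrade'' you conjecture is needed, realized as a deterministic Gronwall lemma rather than as a new $\phi_{\delta,\kappa}$. Note also that in this regime the theorem's bound is stated in terms of $\varepsilon_1$, not $\varepsilon_p$, which is a further sign that the argument must pass through the $L^1$ estimate of Theorem \ref{Main_1} rather than through an intrinsic $L^p$ computation.
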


Using Jensen's inequality, we can extend Theorem \ref{Main_3} as follows.

\begin{Cor} \label{Main_4}\rm 
Let $p \in (1,2)$. Assume that Assumption \ref{Ass_1} with  $2p $ holds.
Then
\begin{align*}
\e[\sup_{0 \leq t \leq T}|X_{t} - \hat{X_{t}}|^p] 
&\leq \left\{ \begin{array}{ll}
\displaystyle C \varepsilon_{2p}^{1/2}. &\textit{if } \alpha = 1/2, \\
\displaystyle C\varepsilon_1^{\alpha/(2\alpha+1)} &\textit{if } \alpha \in (0, 1/2), \\
\displaystyle \frac{C}{\sqrt{\log(1/\varepsilon_1)}} &\textit{if } \alpha = 0.
\end{array}\right.
\end{align*}
\end{Cor}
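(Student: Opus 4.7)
The plan is to derive the corollary as a direct consequence of Theorem~\ref{Main_3} applied with exponent $2p$, together with Jensen's inequality.

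First I would observe that $p \in (1,2)$ implies $2p \in (2,4)$, so in particular $2p \ge 2$, which is the range in which Theorem~\ref{Main_3} is stated. The standing hypothesis of the corollary, namely Assumption~\ref{Ass_1} with parameter $2p$, is exactly what Theorem~\ref{Main_3} requires after replacing $p$ by $2p$. Invoking Theorem~\ref{Main_3} in this way yields a three-case upper bound on $\e[\sup_{0 \le t \le T} |X_t - \hat{X}_t|^{2p}]$, with right-hand sides $C\varepsilon_{2p}^{1/2}$, $C\varepsilon_1^{2\alpha/(2\alpha+1)}$, and $C/\log(1/\varepsilon_1)$ in the cases $\alpha = 1/2$, $\alpha \in (0,1/2)$, and $\alpha = 0$ respectively.

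Next I would apply Jensen's inequality for the concave function $x \mapsto \sqrt{x}$ on $[0,\infty)$ to the non-negative random variable $Y := \sup_{0 \le t \le T}|X_t - \hat{X}_t|^{2p}$. This gives $\e[\sqrt{Y}] \le \sqrt{\e[Y]}$, i.e.
\begin{align*}
\e\bigl[\sup_{0 \le t \le T} |X_t - \hat{X}_t|^{p}\bigr]
\le \Bigl(\e\bigl[\sup_{0 \le t \le T} |X_t - \hat{X}_t|^{2p}\bigr]\Bigr)^{1/2}.
\end{align*}
Substituting the three-case bound from the previous step and taking square roots case by case then reproduces the three-case estimate in the statement of the corollary (the H\"older and logarithmic exponents being halved in passing from Theorem~\ref{Main_3} to the corollary).

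There is no real obstacle: the corollary is bookkeeping on top of Theorem~\ref{Main_3}. The only item to check is that the quantities $\varepsilon_{2p}$ and $\varepsilon_1$ appearing here are the same as those used in the corollary's statement, which is immediate from Definition~\ref{Def_p} and Assumption~$A$-$(p)$ with $p$ replaced by $2p$.
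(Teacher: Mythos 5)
Your approach is exactly the one the paper hints at (``Using Jensen's inequality, we can extend Theorem~\ref{Main_3}''), and your bookkeeping is correct for the cases $\alpha \in (0,1/2)$ and $\alpha = 0$: applying Theorem~\ref{Main_3} with exponent $2p$ and then taking the square root by Jensen halves the exponents $2\alpha/(2\alpha+1)$ and the power of $\log(1/\varepsilon_1)$, giving precisely the stated bounds.

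However, your claim that this ``reproduces the three-case estimate in the statement of the corollary'' is not correct for the case $\alpha = 1/2$, and you should have flagged it. There, Theorem~\ref{Main_3} with exponent $2p$ gives
$\e[\sup_{0 \le t \le T}|X_t - \hat{X}_t|^{2p}] \le C\,\varepsilon_{2p}^{1/2}$,
and applying Jensen yields
$\e[\sup_{0 \le t \le T}|X_t - \hat{X}_t|^{p}] \le \bigl(C\,\varepsilon_{2p}^{1/2}\bigr)^{1/2} = C'\varepsilon_{2p}^{1/4}$,
not $C\,\varepsilon_{2p}^{1/2}$ as the corollary states. Since $\varepsilon_{2p} < 1$ under $A$-$(2p)$, the bound $\varepsilon_{2p}^{1/4}$ is strictly \emph{weaker} than $\varepsilon_{2p}^{1/2}$, so the argument does not establish the corollary as printed; the exponent in that case has not been ``halved'' in the statement, while it has been in the other two cases. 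The most plausible reading is that the paper carries a typographical slip in the $\alpha = 1/2$ case of Corollary~\ref{Main_4} (and its analogue Corollary~\ref{Main_4_n}), and the correct output of the Jensen argument is $C\,\varepsilon_{2p}^{1/4}$; but a careful proof must either note and correct this, or supply a genuinely sharper argument in that case rather than assert that Jensen ``reproduces'' the stated exponent.
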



We extend Theorem \ref{Main_1} for a error of function of bounded variation.
We first recall the definition of a set of function of bounded variation.

\begin{Def} 
For a function $f:\real \to \real$, a function $T_f$ is defined by
\begin{align*}
T_f(x):=\sup \sum_{j=1}^N|f(x_j)-f(x_{j-1})|,
\end{align*}
where the supremum is taken over $N$ and all partitions $-\infty < x_0 < x_1 < \cdots < x_N=x < \infty.$ 
We call $f$ a function of bounded variation, if 
\begin{align*}
V(f):=\lim_{x \rightarrow \infty }T_f(x) < \infty.
\end{align*}
Let $BV$ be the class of all functions of bounded variation.
\end{Def}

For a function of bounded variation, we have the following error estimate.

\begin{Cor} \label{Main_5}\rm 
Assume that Assumption \ref{Ass_1} with $p=1$ holds.
Then for any $g \in BV$ and $r \geq 1 $, 
\begin{align*}
\e[|g(X_{T})-g(\hat{X}_{T})|^r] 
&\leq \left\{ \begin{array}{ll}
\displaystyle V(g)^{r} C \varepsilon_1^{\alpha/(2\alpha+1)} &\textit{if } \alpha \in (0, 1/2], \\
\displaystyle \frac{V(g)^{r} C }{\sqrt{\log (1/\varepsilon_1)}} &\textit{if } \alpha = 0.
\end{array}\right.
\end{align*}	
\end{Cor}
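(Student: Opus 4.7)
\bigskip

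\noindent\textbf{Proof plan for Corollary \ref{Main_5}.}

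The plan is to reduce the BV statement to the already established $L^1$-bound of Theorem \ref{Main_1} by rewriting $g$ as the integral of an indicator against its Lebesgue--Stieltjes signed measure. Concretely, for $g\in BV$, let $\mu_g$ denote the associated signed measure so that $g(x)=g(-\infty)+\mu_g((-\infty,x])$ for all but countably many $x$, and $|\mu_g|(\real)=V(g)$. Then one has the pointwise identity
\begin{align*}
|g(x)-g(y)|\le \int_{\real}\bigl|\mathbf{1}_{z\le x}-\mathbf{1}_{z\le y}\bigr|\,d|\mu_g|(z),
\end{align*}
together with the elementary bound $|g(x)-g(y)|\le V(g)$. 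Combining these gives
\begin{align*}
|g(X_T)-g(\hat X_T)|^r\le V(g)^{r-1}\int_{\real}\bigl|\mathbf{1}_{z\le X_T}-\mathbf{1}_{z\le \hat X_T}\bigr|\,d|\mu_g|(z),
\end{align*}
so that by Fubini it suffices to estimate $\p(z\text{ lies between }X_T\text{ and }\hat X_T)$ uniformly in $z$.

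For this last quantity I would use the trivial inclusion that if $z$ lies between $X_T$ and $\hat X_T$ then $|X_T-z|\le |X_T-\hat X_T|$, so for every $\delta>0$,
\begin{align*}
\p(z\text{ between }X_T,\hat X_T)\le \p(|X_T-z|\le \delta)+\p(|X_T-\hat X_T|>\delta).
\end{align*}
The first term is bounded by $C\delta$ uniformly in $z$ using the Gaussian upper bound on the density of $X_T$ (which the paper has established under Assumption \ref{Ass_1} via the parametrix expansion), and the second by Markov's inequality in the form $\e[|X_T-\hat X_T|]/\delta$. Optimising $\delta=\sqrt{\e[|X_T-\hat X_T|]}$ gives a uniform-in-$z$ bound of order $\sqrt{\e[|X_T-\hat X_T|]}$, and integrating against $d|\mu_g|$ yields
\begin{align*}
\e[|g(X_T)-g(\hat X_T)|^r]\le C\,V(g)^{r}\sqrt{\e[|X_T-\hat X_T|]}.
\end{align*}

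It remains to insert the estimate of Theorem \ref{Main_1} on $\e[|X_T-\hat X_T|]$: taking square roots of $C\varepsilon_1^{2\alpha/(2\alpha+1)}$ in the case $\alpha\in(0,1/2]$ produces $\varepsilon_1^{\alpha/(2\alpha+1)}$, while taking the square root of $C/\log(1/\varepsilon_1)$ in the case $\alpha=0$ produces $1/\sqrt{\log(1/\varepsilon_1)}$, exactly matching the stated rates. The only real obstacle is the uniform-in-$z$ density estimate used in the first displayed bound above; everything else is soft measure-theoretic manipulation and Markov's inequality. Since the Gaussian upper bound on the density of $X_T$ is already a core ingredient of the paper (and indeed is what makes the weighted norm $\|\cdot\|_1$ appear in the first place), this obstacle is handled by directly quoting that estimate.
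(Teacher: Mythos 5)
Your proof is correct and arrives by the same two ingredients the paper uses (the Gaussian density bound from Proposition \ref{GB_n_1} and Theorem \ref{Main_1} with $\tau=T$); the only difference is that the paper simply cites Avikainen's inequality (Proposition \ref{Avi_ineq} with $q=1$), whereas you re-derive that inequality from scratch via the signed-measure representation of $g$, Fubini, and the Markov-plus-density split optimised over $\delta$. The re-derivation is essentially Avikainen's own argument, so the substance matches; just note, to justify replacing $g$ by $\mu_g((-\infty,\cdot])$ almost surely, that both $X_T$ and $\hat X_T$ have densities under Assumption \ref{Ass_1}, so the countably many exceptional points of $g$ carry no mass.
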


\section{Proof of the main results}

\subsection{Gaussian bound for the density of SDE}
In this section we consider the density estimate for SDE (\ref{SDE_1}).
It is well known that if the coefficients $b$ and $\sigma$ are smooth enough, then the density of the solution of SDE can be bounded above and below by Gaussian densities (see, \cite{Ar}, \cite{St} or \cite{Lemaire}). 
The aim of this section is to prove the upper bound for the densities of SDE (\ref{SDE_1}) with bounded measurable drift coefficient and bounded, uniformly elliptic and H\"older continuous diffusion coefficient.
First we introduce the result of an expansion for the density $p_t(x_0,\cdot)$ of $X_t$.

\begin{Prop} [\cite{BK} Theorem 5.6 or Proposition 6.2] \label{BK_result_1}
Assume that $A$-(ii), $A$-(iii) and $A$-(iv) hold.
We define
\begin{align*}
\hat{I}_{t_0}^{m}(y_0,y_{m+1})&:= \int_0^{t_0}dt_1\cdots \int_0^{t_{m-1}}dt_m \int_{\real^m}dy_1 \cdots dy_m \nonumber\\
&\times \prod_{i=0}^{m-1}\hat{\theta}_{t_i-t_{i+1}}(y_{i+1},y_i)p_{t_i-t_{i+1}}^{y_i}(y_{i+1},y_i)p_{t_m}^{y_m}(y_{m+1},y_m),
\end{align*}
where the functions $\hat{\theta}_t(x,z)$ and $p_t^{z}(x,y)$ are given by
\begin{align*}
\hat{\theta}_{t}(x,z)&:=\frac{a(x)-a(z)}{2}\left\{ \frac{(z-x-b(z)t)^2}{t^2 a^2(z)} - \frac{1}{ta(z)}\right\} 
-(b(x)-b(z)) \frac{(z-x-b(z)t)}{t a(z)},
\end{align*}
and
\begin{align*}
p_t^{z}(x,y):=\frac{e^{-\frac{|y-x-b(z)t|^2}{2a(z)t}}}{\sqrt{2\pi a(z)t}}.
\end{align*}
Then for any $t\in(0,T]$, the density of $X_t$ exists and satisfies the following expansion:
\begin{align*} 
p_t(x_0,y)=p_t^{y}(x_0,y)+\sum_{m=1}^{\infty}\hat{I}_{t}^{m}(y,x_0).
\end{align*}
\end{Prop}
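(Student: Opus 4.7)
The proof plan is to derive the expansion by the semigroup/parametrix method of Bally--Kohatsu-Higa. The basic idea is that the frozen kernel $p_t^z(x,y)$ is the fundamental solution of the constant-coefficient heat equation associated to the frozen generator $L^z = b(z)\partial_x + \tfrac12 a(z)\partial_x^2$, so it approximates the true transition density up to a perturbative error encoded by the difference of generators $L-L^z$. One then expresses this error by a Duhamel identity and iterates.

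The first step is an algebraic identity that identifies $\hat\theta$. Direct differentiation of the Gaussian $p_s^z(x,y)$ in $x$ gives
\begin{align*}
\partial_x p_s^z(x,y) = \tfrac{y-x-b(z)s}{a(z)s}\,p_s^z(x,y),\quad
\partial_x^2 p_s^z(x,y) = \Bigl[\tfrac{(y-x-b(z)s)^2}{a(z)^2 s^2} - \tfrac{1}{a(z)s}\Bigr]p_s^z(x,y).
\end{align*}
Specialising to $y=z$ and combining these derivatives (with the signs dictated by the Duhamel step below) one recognises exactly $\hat\theta_s(x,z)\,p_s^z(x,z)$. Thus $\hat\theta_s(x,z)$ is, up to the Gaussian factor, the kernel produced when the perturbation $L-L^z$ acts on the frozen density.

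The second step is the Duhamel formula. Writing $P_t$ for the true semigroup and setting $\tilde P_t f(x) := \int p_t^y(x,y)f(y)\,dy$ for the parametrix operator with freezing at the endpoint, one differentiates $s\mapsto P_{t-s}\tilde P_s$: the time derivative of $\tilde P_s$ produces $L^y_x p_s^y$ while the derivative of $P_{t-s}$ produces $-L_x$, so the algebraic identity above gives
\begin{align*}
p_t(x_0,y_0) = p_t^{y_0}(x_0,y_0) + \int_0^t\!dt_1\!\int\!dy_1\,p_{t-t_1}(x_0,y_1)\,\hat\theta_{t_1}(y_1,y_0)\,p_{t_1}^{y_0}(y_1,y_0).
\end{align*}
I would then iterate: substitute this identity for $p_{t-t_1}(x_0,y_1)$, with $y_1$ now playing the role of $y_0$, and repeat. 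After $m$ iterations one obtains the partial sum $\sum_{k=0}^{m-1}\hat I_t^{k}(y_0,x_0)$ (with the convention $\hat I_t^0 = p_t^{y_0}(x_0,\cdot)$) plus a remainder carrying one factor of the true density $p_{t_m}(x_0,y_m)$.

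The main obstacle is controlling the remainder and proving absolute convergence of the resulting series. Using boundedness of $b$ and $\eta$-H\"older continuity of $a=\sigma^2$ together with the Gaussian factor $p_s^z(x,z)$, one obtains a Gaussian-type bound of the form $|\hat\theta_s(x,z)|\,p_s^z(x,z) \le C\bigl(s^{\eta-1} + s^{-1/2}\bigr) \cdot q_s(x,z)$ for a suitable Gaussian $q_s$, since the second-order term in $\hat\theta$ converts the H\"older factor $|x-z|^{2\eta}$ into the gain $s^{\eta-1}$ after integration. A Chapman--Kolmogorov-type convolution of Gaussians through the intermediate variables $y_1,\ldots,y_m$, followed by the time integration over the simplex $0<t_m<\cdots<t_1<t$, produces a Beta-function product of order $\Gamma(1+m\eta)^{-1}$, which forces both convergence of the series and vanishing of the remainder. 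This technical estimate is the delicate point of the argument and is exactly what is carried out in \cite{BK}; in the present paper the result is therefore invoked by citing Theorem~5.6 / Proposition~6.2 of that reference.
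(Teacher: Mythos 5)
The paper does not give its own proof of this statement; it is invoked verbatim from Bally and Kohatsu-Higa, with the text explicitly describing that reference as proving the expansion ``using a semigroup approach.'' Your sketch correctly identifies the parametrix/Duhamel strategy underlying that result, and your identification of $\hat\theta$ as the kernel produced by the difference of generators acting on the frozen Gaussian matches the definitions in the proposition. So the plan is the right one and corresponds to the cited source.

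Two small imprecisions are worth flagging. First, in your bound on the diffusion part of the kernel, the H\"older continuity of $a=\sigma^2$ gives a factor $|x-z|^{\eta}$ (not $|x-z|^{2\eta}$, since $\sigma$ is bounded so $a$ inherits the same H\"older exponent $\eta$ as $\sigma$), and absorbing $|x-z|^{\eta}/s$ into the Gaussian yields the singularity $s^{\eta/2-1}$, not $s^{\eta-1}$; correspondingly the Beta-convolution of the time integrals produces $\Gamma(1+m\eta/2)^{-1}$, not $\Gamma(1+m\eta)^{-1}$. Both choices are still integrable since $\eta/2-1>-1$, so your argument does not break, but the paper's own Lemma~\ref{Lem_A_1} (used downstream for the Gaussian bound) carries out this computation with the correct exponent and you should match it. Second, the Duhamel step as you wrote it differentiates $s\mapsto P_{t-s}\tilde P_s$, which tacitly assumes the true semigroup already has enough regularity to justify the computation; for the bounded-measurable drift and merely H\"older $\sigma$ of assumption $A$-(ii)--(iv), this is not available a priori and is precisely why Bally--Kohatsu-Higa proceed by a probabilistic semigroup argument (e.g., through Euler approximations) and pass to the limit. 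You do acknowledge this as the ``delicate point'' deferred to the reference, which is the honest and correct disposition; just be aware that the Duhamel identity is the \emph{output} of the argument in that setting, not something one can differentiate one's way to directly.
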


\begin{Rem}\label{Rem_sol_1}
Under the assumptions $A$-(ii), $A$-(iii) with $\eta \in(1/2, 1]$ and $A$-(iv), Fournier and Printems (\cite{FoPr}, Theorem 2.1, 2010) prove that for any $t \in (0,T]$, the density of $X_t$ exists.
\end{Rem}

The following lemma is useful to prove the upper estimate for the density.

\begin{Lem} \label{Lem_A_1}\rm
Assume that $A$-(ii), $A$-(iii) and $A$-(iv) hold. Let $t_0=T$. For any $x,z \in \real$ and $t \in (0,t_0]$,
\begin{align*}
     \left| \hat{\theta}_{t}(x,z) \right| p_{t}^{z}(x,z)
\leq \frac{C_0}{t^{1-\eta/2}} p_{8\lambda} (t,x,z),
\end{align*}
where
\begin{align*}
p_c(t,x,z):=\frac{e^{-\frac{|x-z|^2}{2ct}}}{\sqrt{2\pi ct}}
\end{align*}
and
\begin{align*}
C_0:=&8 K \lambda^{3/2} \exp (t_0 K^2/(4\lambda )-1/2)t_0^{(1-\eta)/2} \\
+& 2^{(3\eta+1)/2}(4+e)K\lambda^{2+\eta/2}\exp(t_0 K^2/(4\lambda)-1-\eta/2).
\end{align*}
\end{Lem}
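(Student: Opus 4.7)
The plan is to estimate $|\hat\theta_t(x,z)|$ piecewise and then, after multiplying by $p_t^z(x,z)=(2\pi a(z)t)^{-1/2}\exp(-\xi^2/(2a(z)t))$ with $\xi:=z-x-b(z)t$, to split the Gaussian factor repeatedly so that each resulting copy either absorbs a polynomial in $\xi$, absorbs the H\"older factor $|x-z|^\eta$, or is retained to produce the target density $p_{8\lambda}(t,x,z)$.

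Using $a=\sigma^2$ with $|\sigma|\leq\sqrt{\lambda}$ and the H\"older hypothesis $A$-(iii) gives $|a(x)-a(z)|\leq 2K\sqrt{\lambda}\,|x-z|^\eta$; combined with $|b(x)-b(z)|\leq 2K$ and $a(z)\geq\lambda^{-1}$, the triangle inequality yields
\begin{equation*}
|\hat\theta_t(x,z)|\;\leq\; K\sqrt{\lambda}\,|x-z|^\eta\!\left(\frac{\xi^2}{t^2 a(z)^2}+\frac{1}{ta(z)}\right)+\frac{2K|\xi|}{ta(z)},
\end{equation*}
naturally partitioning the estimate into three summands (call them $A_1,A_2,B$). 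For each I would invoke the key device $e^{-\xi^2/(2a(z)t)}=e^{-\xi^2/(4a(z)t)}\cdot e^{-\xi^2/(4a(z)t)}$: the first copy absorbs the polynomial in $\xi$ through the elementary one-variable bounds $u^2e^{-u^2/2}\leq 2/e$ and $|u|e^{-u^2/2}\leq e^{-1/2}$ (applied with $u=\xi/\sqrt{a(z)t/2}$), which is the source of the $4/e$ and $e^{-1/2}$ numerics inside $C_0$.

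For the residual copy I would pass to $e^{-\xi^2/(4\lambda t)}$ via $a(z)\leq\lambda$ and then apply the Young-type inequality $\xi^2\geq(z-x)^2/2-(b(z)t)^2\geq(z-x)^2/2-K^2t^2$ to obtain
\begin{equation*}
e^{-\xi^2/(4a(z)t)}\;\leq\;\exp\!\Bigl(\tfrac{K^2t}{4\lambda}\Bigr)\,e^{-(x-z)^2/(8\lambda t)},
\end{equation*}
which, together with $t\leq t_0$, accounts for the exponential factor $\exp(t_0K^2/(4\lambda))$ in $C_0$. Splitting once more, $e^{-(x-z)^2/(8\lambda t)}=e^{-(x-z)^2/(16\lambda t)}\cdot e^{-(x-z)^2/(16\lambda t)}$: one copy yields $p_{8\lambda}(t,x,z)$ after normalization by $\sqrt{16\pi\lambda t}$, while the other absorbs $|x-z|^\eta$ through $\sup_w|w|^\eta e^{-w^2/(16\lambda t)}\leq(8\eta\lambda t)^{\eta/2}e^{-\eta/2}$, producing the $(8\lambda)^{\eta/2}$, $t^{\eta/2}$ and $e^{-\eta/2}$ factors in the second summand of $C_0$.

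It remains to collect the three pieces. The sum $A_1+A_2$ produces the second summand of $C_0$: the numerical combination $(4e^{-1}+1)e^{-\eta/2}$ simplifies to $(4+e)e^{-1-\eta/2}$, which is precisely the shape appearing there. For $B$, the absence of an $|x-z|^\eta$ factor leaves a residual $t^{-1}$ power rather than $t^{-(1-\eta/2)}$; this is reconciled using the elementary identity $t^{-1/2}\leq t_0^{(1-\eta)/2}\,t^{-(1-\eta/2)}$ (valid for $t\leq t_0$ and $\eta\leq 1$), which is exactly the source of the prefactor $t_0^{(1-\eta)/2}$ in the first summand of $C_0$. The real difficulty lies not in any single step but in bookkeeping: carefully tracking the numerous powers of $\lambda$ and small numerical constants ($4/e$, $e^{-1/2}$, $\sqrt{8\lambda}$, and the density-normalization factor $\sqrt{16\pi\lambda t}$) through the chained inequalities so as to land exactly on the stated $C_0$.
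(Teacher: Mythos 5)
Your proposal follows the paper's proof essentially step for step: the same decomposition into a drift summand and diffusion summands, the same repeated splitting of the Gaussian $e^{-\xi^2/(2a(z)t)}$ so that one copy absorbs the polynomial in $\xi$ via the elementary bounds $u^2e^{-u^2/2}\leq 2/e$ and $|u|e^{-u^2/2}\leq e^{-1/2}$, the same pass from $a(z)$ to $\lambda$ followed by the inequality $|u-v|^2\geq \tfrac12|u|^2-|v|^2$ to eliminate the drift from the exponent (yielding the factor $\exp(t_0K^2/(4\lambda))$), a further split of $e^{-(x-z)^2/(8\lambda t)}$ to absorb $|x-z|^\eta$, and the identity $t^{-1/2}\leq t_0^{(1-\eta)/2}t^{-(1-\eta/2)}$ for the drift piece. (Small slip: the residual power in your $B$ term is $t^{-1/2}$, not $t^{-1}$, once the $p_{8\lambda}$ normalization is accounted for; the identity you then invoke is the right one for $t^{-1/2}$.)

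One substantive point, and it is to your credit: you correctly derive $|a(x)-a(z)|\leq 2K\sqrt{\lambda}\,|x-z|^{\eta}$ from $A$-(iii) and $A$-(iv), whereas in the diffusion step the paper silently replaces $|a(x)-a(z)|$ by $K|x-z|^{\eta}$, dropping the factor $2\sqrt{\lambda}$. With your (correct) bound the second summand of $C_0$ picks up an extra $2\sqrt{\lambda}$, so you would not ``land exactly on the stated $C_0$''; the lemma of course survives, since all that matters is the existence of a constant with the right parameter dependence, but the displayed numeric in the paper appears to be short by that factor.
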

\begin{proof}
We divide the two parts;\\
First (drift part):
Note that for any $x,z \in \real$ and $t>0$,
\begin{align}\label{simple_esti_1}
\frac{|z-x-{b}(z)t|}{\sqrt{t{a}(z)}} e^{-\frac{1}{2}\frac{|z-x-{b}(z)t|^2}{2{a}(z)t}}
 \leq \sqrt{\frac{2}{e}}.
\end{align}
Since ${b}$ is bounded and ${a}={\sigma}^2$ is bounded and uniformly elliptic, we have from (\ref{simple_esti_1}),
\begin{align*}
     &|{b}(x)-{b}(z)| \left|\frac{z-x-{b}(z)t}{t{a}(z)} \right| p_{t}^{z}(x,z)
\leq 2 K \left|\frac{z-x-{b}(z)t}{t{a}(z)} \right| \frac{e^{-\frac{|z-x-{b}(z)t|^2}{2{a}(z)t}}}{\sqrt{2\pi {a}(z)t}}\\
&\leq \frac{4 K}{\sqrt{e}} \frac{1}{\sqrt{t{a}(z)}} \frac{e^{-\frac{1}{2}\frac{|z-x-{b}(z)t|^2}{2{a}(z)t}}}{\sqrt{2\pi (2{a}(z))t}}
\leq \frac{4 K \sqrt{\lambda/e}}{\sqrt{t}} \frac{e^{-\frac{|z-x-{b}(z)t|^2}{2(2\lambda )t}}}{\sqrt{2\pi (2/\lambda )t}}
\leq \frac{4 K \lambda^{3/2} e^{-1/2}}{\sqrt{t}} \frac{e^{-\frac{|z-x-{b}(z)t|^2}{2(2\lambda )t}}}{\sqrt{2\pi (2\lambda )t}}.
\end{align*}
Using the inequality $|x-y|^2 \geq \frac{1}{2}|x|^2-|y|^2$, we get
\begin{align*}
|{b}(x)-{b}(z)| \left|\frac{z-x-{b}(z)t}{t{a}(z)} \right| p_{t}^{z}(x,z) 
&\leq \frac{4 K \lambda^{3/2} e^{-1/2}\exp(t_0 K^2/(4\lambda ))}{\sqrt{t}} \frac{e^{-\frac{|z-x|^2}{2(4\lambda )t}}}{\sqrt{2\pi (2\lambda )t}} \\
&\leq \frac{8 K \lambda^{3/2} \exp (t_0 K^2/(4\lambda )-1/2)}{\sqrt{t}} p_{8\lambda}(t,x,z).
\end{align*}
Since for any $t \in (0,t_0]$, 
\begin{align*}
\frac{1}{\sqrt{t}} \leq \frac{t_0^{(1-\eta)/2}}{t^{1-\eta/2}},
\end{align*}
we have
\begin{align*}
&|{b}(x)-{b}(z)| \left|\frac{z-x-{b}(z)t}{t{a}(z)} \right| p_{t}^{z}(x,z) 
\leq \frac{8 K \lambda^{3/2} \exp (t_0 K^2/(4\lambda )-1/2)t_0^{(1-\eta)/2}}{t^{1-\eta/2}} p_{8\lambda}(t,x,z).
\end{align*}
This completes the drift part. Now we consider the second part.\\
Second (diffusion part): Note that
\begin{align}\label{a_esti_1}
     & \frac{|{a}(x)-{a}(z)|}{2}\left| \frac{(z-x-{b}(z)t)^2}{t^2 {a}^2(z)} - \frac{1}{t{a}(z)}\right| \ p_{t}^{z}(x,z)\nonumber\\
&\leq  \frac{|{a}(x)-{a}(z)|}{2} \left\{ \left| \frac{(z-x-{b}(z)t)^2}{t^2 {a}^2(z)}\right| + \frac{1}{t{a}(z)} \right\} e^{-\frac{1}{2}\frac{|z-x-{b}(z)t|^2}{2{a}(z)t}} \frac{e^{-\frac{|z-x-{b}(z)t|^2}{2(2{a}(z))t}}}{\sqrt{2\pi {a}(z)t}}.
\end{align}
Since 
\begin{align*}
\frac{|z-x-{b}(z)t|^2}{|t {a}(z)|} e^{-\frac{1}{2}\frac{|z-x-{b}(z)t|^2}{2{a}(z)t}} \leq \frac{4}{e},
\end{align*}
by using the inequality $|x-y|^2 \geq \frac{1}{2}|x|^2-|y|^2$, (\ref{a_esti_1}) is bounded by
\begin{align}
     & \frac{|{a}(x)-{a}(z)|}{2} \left( \frac{4}{e}+1 \right) \frac{1}{t{a}(z)} \frac{e^{-\frac{|z-x-{b}(z)t|^2}{2(2{a}(z))t}}}{\sqrt{2\pi {a}(z)t}} \nonumber \\
&\leq \frac{(4+e)\lambda^2 \exp(t_0 K^2/(4 \lambda))}{2e} \frac{|{a}(x)-{a}(z)|}{t}\frac{e^{-\frac{|z-x|^2}{2(4\lambda)t}}}{\sqrt{2\pi \lambda t}} \label{a_esti_2}.
\end{align}
By H\"older continuity of ${\sigma}$, (\ref{a_esti_2}) is less than
\begin{align}
\frac{(4+e)\lambda^2 \exp(t_0 K^2/(4 \lambda))}{2e} \frac{K|x-z|^{\eta}}{t}\frac{e^{-\frac{|z-x|^2}{2(4\lambda)t}}}{\sqrt{2\pi \lambda t}}. \label{a_esti_3}
\end{align}
Since
\begin{align*}
      \frac{|x-z|^{\eta}}{t^{\eta/2}} e^{-\frac{|x-z|^2}{2(8\lambda)t}}
 \leq \left(\frac{8\lambda \eta}{ e}\right)^{\eta/2},
\end{align*}
(\ref{a_esti_3}) is bounded by
\begin{align*}
    \frac{2^{(3\eta+1)/2}(4+e)K\lambda^{2+\eta/2}\exp(t_0 K^2/(4\lambda)-1-\eta/2)}{t^{1-\eta/2}} p_{8\lambda}(t,x,z).
\end{align*}
Therefore we have 
\begin{align*}
\left| \hat{\theta}_{t}(x,z) \right| p_{t}^{z}(x,z) 
\leq \frac{C_0}{t^{1-\eta/2}} p_{8\lambda}(t,x,z),
\end{align*}
where the constant $C_0$ is given by
\begin{align*}
C_0:=&8 K \lambda^{3/2} \exp (t_0 K^2/(4\lambda )-1/2)t_0^{(1-\eta)/2} \\
+& 2^{(3\eta+1)/2}(4+e)K\lambda^{2+\eta/2}\exp(t_0 K^2/(4\lambda)-1-\eta/2).
\end{align*}
This concludes the proof of the statement.
\end{proof}

Using Lemma \ref{Lem_A_1}, we can prove the density estimate.

\begin{Prop} \label{GB_n_1} 
Under the assumption of Proposition \ref{BK_result_1}, there exist constant  $C\geq 1$ such that for any $y \in \real$ and $t\in (0,T]$, the density $p_t(x_0,\cdot)$ of $X_t$ satisfies the following estimate: 
\begin{align*}
    p_t(x_0,y)
\leq C p_{8\lambda}(t,x_0,y).
\end{align*}
\end{Prop}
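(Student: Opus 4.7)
The plan is to run the parametrix expansion from Proposition \ref{BK_result_1} term by term, using Lemma \ref{Lem_A_1} to transfer every factor onto the clean Gaussian kernel $p_{8\lambda}(t,\cdot,\cdot)$, and then to collapse the resulting iterated convolutions by Chapman--Kolmogorov. Specifically, writing $p_t(x_0,y)=p_t^{y}(x_0,y)+\sum_{m\geq 1}\hat{I}_t^{m}(y,x_0)$, I would handle the leading term first. Since $|b(y)|\leq K$ and $a(y)\in[\lambda^{-1},\lambda]$, the inequality $|y-x_0-b(y)t|^2\geq \tfrac12|y-x_0|^2-K^2t^2$ together with $a(y)\leq \lambda$ yields $p_t^{y}(x_0,y)\leq C_1\,p_{8\lambda}(t,x_0,y)$ for a constant $C_1$ depending only on $K,\lambda,T$. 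The same argument controls the trailing factor $p_{t_m}^{y_m}(y_{m+1},y_m)$ appearing in $\hat{I}_t^m$.

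Next I would bound each of the $m$ interior factors $\hat{\theta}_{t_i-t_{i+1}}(y_{i+1},y_i)\,p_{t_i-t_{i+1}}^{y_i}(y_{i+1},y_i)$ using Lemma \ref{Lem_A_1} by $C_0(t_i-t_{i+1})^{\eta/2-1}\,p_{8\lambda}(t_i-t_{i+1},y_{i+1},y_i)$. After this replacement every spatial factor is a $p_{8\lambda}$-kernel, so the $m$-fold integral over $y_1,\dots,y_m$ telescopes by the semigroup property
\begin{equation*}
\int_{\real}p_{8\lambda}(s,u,v)\,p_{8\lambda}(r,v,w)\,dv=p_{8\lambda}(s+r,u,w),
\end{equation*}
and since the time differences sum to $t$, the whole spatial integral collapses to a single $p_{8\lambda}(t,x_0,y)$.

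It then remains to control the time integral
\begin{equation*}
J_m(t):=\int_0^{t}\!\!dt_1\cdots\!\int_0^{t_{m-1}}\!\!dt_m\,\prod_{i=0}^{m-1}(t_i-t_{i+1})^{\eta/2-1},
\qquad t_0=t.
\end{equation*}
Changing variables to the increments $s_i:=t_{i-1}-t_i$ puts this in standard Dirichlet form on the simplex $\{s_i\geq 0,\,\sum_{i=1}^m s_i\leq t\}$, and the Dirichlet integral yields $J_m(t)=t^{m\eta/2}\,\Gamma(\eta/2)^m/\Gamma(1+m\eta/2)$. Combining everything gives
\begin{equation*}
|\hat{I}_t^{m}(y,x_0)|\leq C_1C_0^{m}\,\frac{(T^{\eta/2}\Gamma(\eta/2))^{m}}{\Gamma(1+m\eta/2)}\,p_{8\lambda}(t,x_0,y),
\end{equation*}
and the Mittag-Leffler-type sum $\sum_{m\geq 0}C_0^{m}T^{m\eta/2}\Gamma(\eta/2)^m/\Gamma(1+m\eta/2)$ converges, producing the claimed constant $C$.

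The only technical obstacle I foresee is a bookkeeping one: verifying that the symmetric role of the two spatial arguments in $p_{8\lambda}(\cdot,\cdot,\cdot)$ really lets the telescoping through regardless of whether one carries the expansion in ``forward'' or ``backward'' direction, and confirming that $y_0=y$, $y_{m+1}=x_0$ is the correct convention consistent with the statement of Proposition \ref{BK_result_1}. Once the orientation is fixed, the rest is the Dirichlet computation and the Mittag-Leffler convergence, both standard.
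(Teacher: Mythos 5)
Your proposal is correct and follows essentially the same route as the paper: bound the interior factors $\hat{\theta}\,p^{z}$ by Lemma \ref{Lem_A_1}, bound the leading/trailing Gaussian factors by $C\,p_{8\lambda}$ using $|a-b|^2\geq\tfrac12|a|^2-|b|^2$ and ellipticity, collapse the spatial integrals by Chapman--Kolmogorov, and sum the resulting time integrals $t^{m\eta/2}\Gamma(\eta/2)^m/\Gamma(1+m\eta/2)$ as a Mittag--Leffler-type series. The paper writes the time integral as an iterated Beta-function product rather than a single Dirichlet integral, and carries $t_0^{m(1-\eta/2)}$ where $t_0^{m\eta/2}$ is meant, but these are cosmetic; your orientation worry dissolves because $p_{8\lambda}$ is symmetric in its spatial arguments and the indices chain correctly with $y_0=y$, $y_{m+1}=x_0$.
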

\begin{proof}
Let $x_0=y_{m+1}$, $y=y_0$ and $t_0=t$.
From definition of $\hat{I}_{t_0}^{m}$, Lemma \ref{Lem_A_1} and the Chapman-Kolmogorov equation, we have
\begin{align*}
      |\hat{I}_{t_0}^{m}(y_0,y_{m+1})|
&\leq \int_0^{t_0}dt_1\cdots \int_0^{t_{m-1}}dt_m \int_{\real^m}dy_1 \cdots dy_m \nonumber\\
&\times \prod_{i=0}^{m-1} \left| \hat{\theta}_{t_i-t_{i+1}}(y_{i+1},y_i)\right| p_{t_i-t_{i+1}}^{y_i}(y_{i+1},y_i) p_{t_m}^{y_m}(y_{m+1},y_m) \\
&\leq \int_0^{t_0}dt_1\cdots \int_0^{t_{m-1}}dt_m \int_{\real^m}dy_1 \cdots dy_m \nonumber\\
&\times \prod_{i=0}^{m-1}\frac{C_0}{(t_i-t_{i+1})^{1-\eta/2}}
p_{8\lambda}(t_i-t_{i+1},y_{i+1},y_i) \sqrt{8\lambda} e^{K^2 t_0/2} p_{8\lambda}	(t_m,y_{n+1},y_n) \\
&\leq   \int_0^{t_0}dt_1\cdots \int_0^{t_{m-1}}dt_m \prod_{i=0}^{m-1}\frac{C}{(t_i-t_{i+1})^{1-\eta/2}} p_{8\lambda}(t,x_0, y).
\end{align*}
Since $1-\eta/2 \in [1/2, 3/4]$, we have
\begin{align*}
\sum_{m=1}^{\infty} \int_0^{t_0}dt_1\cdots \int_0^{t_{m-1}}dt_m \prod_{i=0}^{m-1} \frac{C}{(t_i-t_{i+1})^{1-\eta/2}}
& = \sum_{m=1}^{\infty} t_0^{m(1-\eta/2)} C^{m} \prod_{i=0}^{m-1} B(1+i\eta/2, \eta/2) \\
& = \sum_{m=1}^{\infty} \left( t_0^{(1-\eta/2)} C \Gamma(\eta/2) \right)^{m} \frac{1}{\Gamma(1+m\eta/2	)} \\
&<\infty.
\end{align*}
This concludes the statement.
\end{proof}

\subsection{Key estimate}

In this section, we give a key estimate to prove the main results.

\begin{Lem} \label{Key_esti}\rm
Let $p \geq 1$.
Assume that $A$-(ii), $A$-(iii) and $A$-(iv) hold.
Then there exists a constant $C$ such that
\begin{align*}
    \int_0^T \e[|b (\hat{X}_s) - \hat{b} (\hat{X}_s)|^p] ds
&\leq C|| b-\hat{b} ||_p^p
\end{align*}
and
\begin{align*}
    \int_0^T \e[|\sigma (\hat{X}_s) - \hat{\sigma} (\hat{X}_s)|^{2p}] ds
&\leq C|| \sigma-\hat{\sigma} ||_{2p}^{2p}.
\end{align*}
\end{Lem}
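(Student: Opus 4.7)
The plan is to prove both inequalities by a single uniform argument: apply the Gaussian upper bound from Proposition~\ref{GB_n_1} (which is valid for $\hat{X}$, since $\hat{b}$, $\hat{\sigma}$ satisfy $A$-(ii), $A$-(iii), $A$-(iv) by assumption), then swap the order of integration and estimate the resulting time integral.

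First, I would treat both statements at once. For any nonnegative measurable $f$, Proposition~\ref{GB_n_1} applied to $\hat{X}$ gives a density $\hat{p}_s(x_0,\cdot)$ satisfying
\begin{align*}
\e[f(\hat{X}_s)]
= \int_{\real} f(y)\,\hat{p}_s(x_0,y)\,dy
\leq C \int_{\real} f(y)\,\frac{e^{-|y-x_0|^2/(2(8\lambda)s)}}{\sqrt{2\pi(8\lambda)s}}\,dy,
\end{align*}
for every $s\in(0,T]$. Integrating this over $s\in[0,T]$ and applying Fubini (the integrand is nonnegative, so there is no measurability issue), I get
\begin{align*}
\int_0^T \e[f(\hat{X}_s)]\,ds
\leq C \int_{\real} f(y) \left( \int_0^T \frac{e^{-|y-x_0|^2/(2(8\lambda)s)}}{\sqrt{2\pi(8\lambda)s}}\,ds \right) dy.
\end{align*}

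Next I would bound the inner time integral by decoupling it. Since the map $s\mapsto e^{-|y-x_0|^2/(2(8\lambda)s)}$ is increasing on $(0,T]$, we may replace $s$ by $T$ in the exponent and pull the result out, giving
\begin{align*}
\int_0^T \frac{e^{-|y-x_0|^2/(2(8\lambda)s)}}{\sqrt{2\pi(8\lambda)s}}\,ds
\leq e^{-|y-x_0|^2/(2(8\lambda)T)} \int_0^T \frac{ds}{\sqrt{2\pi(8\lambda)s}}
= \frac{2\sqrt{T}}{\sqrt{2\pi(8\lambda)}}\, e^{-|y-x_0|^2/(2(8\lambda)T)}.
\end{align*}
Absorbing this $T$-and-$\lambda$ dependent factor into $C$, I obtain
\begin{align*}
\int_0^T \e[f(\hat{X}_s)]\,ds
\leq C \int_{\real} f(y)\, e^{-|y-x_0|^2/(2(8\lambda)T)}\,dy.
\end{align*}

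Finally, I would specialize $f(y) = |b(y)-\hat{b}(y)|^p$ and $f(y) = |\sigma(y)-\hat{\sigma}(y)|^{2p}$; by Definition~\ref{Def_p}, the right-hand sides are exactly $C\|b-\hat{b}\|_p^p$ and $C\|\sigma-\hat{\sigma}\|_{2p}^{2p}$ respectively, which is the claim. There is no real obstacle here; the only point to verify carefully is that the weight $e^{-|x-x_0|^2/(2(8\lambda)T)}$ appearing in the definition of $\|\cdot\|_p$ matches precisely the Gaussian factor produced by combining the density bound $C\,p_{8\lambda}(s,x_0,\cdot)$ with the monotonicity argument above — and this is exactly why the constant $8\lambda$ in Definition~\ref{Def_p} was chosen.
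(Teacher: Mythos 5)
Your proof is correct and follows essentially the same route as the paper's: bound $\hat{p}_s(x_0,\cdot)$ by $Cp_{8\lambda}(s,x_0,\cdot)$ via Proposition~\ref{GB_n_1}, use the monotonicity of $s\mapsto e^{-c/s}$ to replace $s$ by $T$ in the exponent, and then integrate the remaining $s^{-1/2}$ singularity over $[0,T]$. The only cosmetic difference is that you phrase the argument once for a generic nonnegative $f$ and invoke Fubini explicitly, while the paper writes out the drift case and notes the diffusion case is analogous.
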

\begin{proof}
We only prove the statement for the drift case.
From Proposition \ref{GB_n_1}, there exists $C\geq 1$ such that for any $x \in \real $ and $s \in (0,T]$,
\begin{align*}
    \hat{p}_s(x_0,x)
\leq C p_{8\lambda}(s,x_0,x)
\leq \frac{C}{\sqrt{s}} e^{-\frac{|x-x_0|^2}{2(8\lambda)T}},
\end{align*}
where $\hat{p}_s(x_0,\cdot)$ is a density function of $\hat{X}_s$.
Then 
\begin{align*}
\int_0^T \e[|b (\hat{X}_s) - \hat{b} (\hat{X}_s)|^p] ds
&= \int_0^Tds \int_{\real } dx |b(x) - \hat{b}(x) |^p \hat{p}_s(x_0,x) \\
&\leq \int_0^Tds \frac{C}{\sqrt{s}} \int_{\real } dx |b(x) - \hat{b}(x) |^p e^{-\frac{|x-x_0|^2}{2(8\lambda)T}} \\
&\leq C|| b-\hat{b} ||_p^p
\end{align*}
This concludes the proof.
\end{proof}

\subsection{Yamada and Watanabe approximation technique}
In this section, we introduce the approximation technique of Yamada and Watanabe (see \cite{Yamada} or \cite{Gyongy}).
To prove the main results, we will use this technique.
For each $\delta \in (1,\infty)$ and $\kappa \in (0,1)$, we define a continuous function $\psi _{\delta, \kappa}: \real \to \real^+$ with $supp\: \psi _{\delta, \kappa}  \subset [\kappa/\delta, \kappa]$ such that
\begin{align*} 
\int_{\kappa/\delta}^{\kappa} \psi _{\delta, \kappa}(z) dz = 1 \textit{ and } 0 \leq \psi _{\delta, \kappa}(z) \leq \frac{2}{z \log \delta}, \:\:\:z > 0.
\end{align*}
Our example of $\psi_{\delta, \kappa}$ is
\begin{align*} 
\psi_{\delta, \kappa}(z) := \mu_{\delta, \kappa} \exp \left[{-\frac{1}{(\kappa - z)(z-\kappa /\delta)}}\right] {\bf 1}_{(\kappa /\delta, \kappa)}(z),
\end{align*}
where $\mu_{\delta, \kappa}^{-1} := \int_{\kappa/\delta}^{\kappa} \exp({-\frac{1}{(\kappa - z)(z-\kappa /\delta)}}) dz$.
We define a function $\phi_{\delta, \kappa} \in C^2(\real;\real)$ by
\begin{align*}
\phi_{\delta, \kappa}(x)&:=\int_0^{|x|}\int_0^y \psi _{\delta, \kappa}(z)dzdy.
\end{align*}
It is easy to verify that $\phi_{\delta, \kappa}$ has the following useful properties: 
\begin{align} 
&\frac{\phi'_{\delta, \kappa}(x)}{x}>0, \text{ for any $x \in \real \setminus \{0\}$}. \label{phi1.5}\\
&0 \leq |\phi'_{\delta, \kappa}(x)| \leq 1, \text{ for any $x \in \real$}. \label{phi2} \\
|x| &\leq \kappa + \phi_{\delta, \kappa}(x), \text{ for any $x \in \real $}. \label{phi1}\\ 
\phi''_{\delta, \kappa}(\pm|x|)&=\psi_{\delta, \kappa}(|x|) \leq \frac{2}{|x|\log \delta}{\bf 1}_{[\kappa/\delta, \kappa]}(|x|), \text{ for any $x \in \real \setminus\{0\}$}. \label{phi4}
\end{align}
In particular, the property (\ref{phi1.5}) plays a crucial rule to consider discontinuous drift.

\subsection{Proof of Theorem \ref{Main_1}}
To simplify the discussion, we set
\begin{align*}
Y_t:=X_t-\hat{X_t}.
\end{align*}

\begin{proof}[Proof of Theorem \ref{Main_1}]
Let $\delta \in (1, \infty)$ and $\kappa \in (0,1)$.
From It\^o's formula, (\ref{phi2}) and (\ref{phi1}), we have
\begin{align} \label{Ito_1} 
  |Y_t|
&\leq \kappa + \phi_{\delta, \kappa}(Y_t)\nonumber \\
&=    \kappa + \int_0^t \phi'_{\delta, \kappa}(Y_s) ({b}(X_s) - \hat{b}(\hat{X}_s)) ds + \frac{1}{2}\int_0^t \phi''_{\delta,\kappa}(Y_s) | {\sigma}(X_s) -  \hat{\sigma}(\hat{X}_s) |^2ds
 + M_t^{\delta, \kappa} \nonumber \\
& =    \kappa + \int_0^t \phi'_{\delta, \kappa}(Y_s) ({b}(X_s) - b(\hat{X}_s)) ds 
+ \int_0^t \phi'_{\delta, \kappa}(Y_s) (b(\hat{X}_s) - \hat{b}(\hat{X}_s)) ds \nonumber \\
& + \frac{1}{2}\int_0^t \phi''_{\delta,\kappa}(Y_s) |{\sigma}(X_s) -  \hat{\sigma}(\hat{X}_s) |^2ds
 + M_t^{\delta, \kappa} \nonumber \\
&\leq   \kappa
  + \int_0^t \phi'_{\delta, \kappa}(Y_s) ({b}(X_s) - b(\hat{X}_s)) ds 
  + \int_0^T |b(\hat{X}_s) - \hat{b}(\hat{X}_s)| ds \nonumber \\
& + \frac{1}{2}\int_0^t \phi''_{\delta,\kappa}(Y_s) |{\sigma}(X_s) -  \hat{\sigma}(\hat{X}_s) |^2ds
 + M_t^{\delta, \kappa},
\end{align}
where
\begin{align*}
  M_t^{\delta, \kappa}
:=\int_0^t \phi'_{\delta,\kappa}(Y_s) ( {\sigma}(X_s) -  \hat{\sigma}(\hat{X}_s)) dW_s.
\end{align*}
Note that since $\sigma$, $\hat{\sigma}$ and $ \phi'_{\delta,\kappa}$ are bounded, $(M_t^{\delta, \kappa})_{0\leq t \leq T}$ is a martingale so $\e[M_t^{\delta, \kappa}]=0$.
Since $b \in \mathcal{L}$, for any $x, y \in \real$ with $x \neq y$, we have, from (\ref{phi1.5}) and (\ref{phi2}),
\begin{align*} 
      &\phi'_{\delta, \kappa}(x-y)(b(x)-b(y))
 =\frac{\phi'_{\delta, \kappa}(x-y)}{x-y}(x-y)(b(x)-b(y))
\leq L\frac{\phi'_{\delta, \kappa}(x-y)}{x-y}|x-y|^2
 \leq L|x-y|.
\end{align*} 
Therefore we get
\begin{align}\label{esti_main_1}
 \int_0^t \phi'_{\delta, \kappa}(Y_s) ({b}(X_s) - b(\hat{X}_s)) ds 
\leq  L \int_0^t |Y_s|ds.
\end{align}
Using Lemma \ref{Key_esti} with $p=1$, we have
\begin{align}\label{esti_main_2}
       \int_0^T \e[|b (\hat{X}_s) - \hat{b} (\hat{X}_s)|] ds
&\leq C|| b-\hat{b} ||_1.
\end{align}\label{esti_sigma_1}
From (\ref{phi4}) and $(x+y)^2 \leq 2x^2 + 2y^2$ for any $x,y\geq 0$, we have
\begin{align}
   &\frac{1}{2}\int_0^t \phi''_{\delta,\kappa}(Y_s) |{\sigma}(X_s) -  \hat{\sigma}(\hat{X}_s) |^2ds 
   \leq \int_0^t \frac{{\bf 1}_{[\kappa/\delta, \kappa]}(|Y_s|)}{|Y_s|\log \delta} |{\sigma}(X_s) -  \hat{\sigma}(\hat{X}_s) |^2ds \nonumber\\ 
  & \leq 2\int_0^t \frac{{\bf 1}_{[\kappa/\delta, \kappa]}(|Y_s|)}{|Y_s|\log \delta} |{\sigma}(X_s) -  {\sigma}(\hat{X}_s) |^2ds
  + 2\int_0^t \frac{{\bf 1}_{[\kappa/\delta, \kappa]}(|Y_s|)}{|Y_s|\log \delta} |{\sigma}(\hat{X}_s) -  \hat{\sigma}(\hat{X}_s) |^2ds \nonumber\\
  & \leq 2 \int_0^t \frac{{\bf 1}_{[\kappa/\delta, \kappa]}(|Y_s|)}{|Y_s|\log \delta} |{\sigma}(X_s) -  {\sigma}(\hat{X}_s) |^2ds 
+  \frac{2\delta}{\kappa \log \delta}\int_0^T |{\sigma}(\hat{X}_s) -  \hat{\sigma}(\hat{X}_s) |^2ds \label{esti_main_2_5}
\end{align}
Using Lemma \ref{Key_esti} with $p=1$, we have
\begin{align}\label{esti_main_3}
    \frac{2\delta}{\kappa \log \delta}\int_0^T \e[|{\sigma}(\hat{X}_s) -  \hat{\sigma}(\hat{X}_s) |^2]ds
\leq \frac{C \delta }{\kappa \log \delta} || \sigma-\hat{\sigma} ||_{2}^{2}.
\end{align}
Since $\sigma$ is $\eta=1/2+\alpha$-H\"older continuous, we have
\begin{align}\label{esti_main_4}
2 \int_0^T \frac{{\bf 1}_{[\kappa/\delta, \kappa]}(|Y_s|)}{|Y_s|\log \delta} |{\sigma}(X_s) -  {\sigma}(\hat{X}_s) |^2ds
&\leq 2\int_0^T \frac{{\bf 1}_{[\kappa/\delta, \kappa]}(|Y_s|)}{|Y_s|\log \delta} |Y_s|^{1+2\alpha} ds
 \leq \frac{C \kappa^{2\alpha}}{\log \delta}.
\end{align}
Let $\tau $ be a stopping time with $\tau \leq T$ and $Z_t:=|Y_{t \wedge \tau}|$.
From (\ref{Ito_1}), (\ref{esti_main_1}), (\ref{esti_main_2}), (\ref{esti_main_3}) and (\ref{esti_main_4}), we obtain
\begin{align*}
     \e[Z_t]
&\leq \kappa
 + L\int_0^t \e[Z_s] ds
 + C|| b- \hat{b} ||_1
 + \frac{C \delta }{\kappa \log \delta} || \sigma-\hat{\sigma} ||_{2}^{2}
 + \frac{C \kappa^{2\alpha}}{\log \delta} \\
&\leq  \kappa
 + L\int_0^t \e[Z_s] ds
 + C \varepsilon_1
 +\frac{C \delta }{\kappa \log \delta} \varepsilon_1
 + \frac{C \kappa^{2\alpha}}{\log \delta}.
\end{align*}

Let $\alpha \in (0, 1/2]$. From Assumption \ref{Ass_1} $A$-($1$), $\varepsilon_1 <1$ so we choose $\delta = 2$ and $\kappa = \varepsilon_1^{1/(2\alpha +1) }$. Then we have
\begin{align*}
      \e[Z_t]
 &\leq L\int_0^t \e[Z_s] ds
 + \varepsilon_1^{1/(2\alpha +1)}
 + C\varepsilon_1
 + C \varepsilon_1^{1-1/(2\alpha +1)}
 + C\varepsilon_1^{2 \alpha /(2\alpha +1)} \\
 &\leq L\int_0^t \e[Z_s] ds
 + C\varepsilon_1^{2 \alpha /(2\alpha +1)}.
\end{align*}
By Gronwall's inequality, we get 
\begin{align*}
      \e[Z_t]
\leq  C\varepsilon_1^{2 \alpha /(2\alpha +1)}.
\end{align*}
Therefore by the dominated convergence theorem, we conclude the statement taking $t \to T$.

Let $\alpha = 0$, From Assumption \ref{Ass_1} $A$-($1$), $1/\log (1/\varepsilon_1)<1$ so we choose $\delta = \varepsilon_1^{-1/2}$ and $\kappa = 1/\log (1/\varepsilon_1)$.
Then we have
\begin{align*}
      \e[Z_t]
 &\leq L\int_0^t \e[Z_s] ds
 + \frac{1}{\log (1/\varepsilon_1)}
 + C \varepsilon_1
 + C \varepsilon_1 ^{1/2}
 + \frac{C}{\log(1/\varepsilon_1)}
 \leq L\int_0^t \e[Z_s] ds
 + \frac{C}{\log (1/\varepsilon_1)}.
\end{align*}
By Gronwall's inequality, we obtain
\begin{align*}
      \e[Z_t]
\leq  \frac{C}{\log (1/\varepsilon_1)}.
\end{align*}
Therefore by the dominated convergence theorem, we conclude the statement taking $t \to T$.
\end{proof}

\subsection{Proof of Theorem \ref{Main_2}}
Let $V_t:= \sup_{0 \leq s \leq t} |Y_s|$.
Recall that for each $\delta \in (1,\infty)$ and $\kappa \in (0,1)$,
\begin{align*}
  M_t^{\delta, \kappa}
=\int_0^t \phi'_{\delta,\kappa}(Y_s) ( {\sigma}(X_s) -  \hat{\sigma}(\hat{X}_s)) dW_s.
\end{align*}
The quadratic variation of $M_t^{\delta, \kappa}$ is given by
\begin{align*}
  \langle M^{\delta, \kappa} \rangle_t
 =\int_0^t |\phi'_{\delta,\kappa}(Y_s)|^2 |{\sigma}(X_s) -  \hat{\sigma}(\hat{X}_s)|^2 ds.
\end{align*}

Before proving Theorem \ref{Main_2}, we estimate the expectation of $\sup_{0 \leq s \leq t}|M_s^{\delta, \kappa}|$ for any $t\in [0,T]$, $\delta \in (1,\infty)$ and $\kappa \in (0,1)$. 

\begin{Lem}\label{Mart_L1}
Assume that $A$-(ii), $A$-(iii), $A$-(iv) and $A$-($1$) hold.
Then there exists a constant $C$ such that for any $t\in [0,T]$, $\delta \in (1,\infty)$ and $\kappa \in (0,1)$,
\begin{align*} 
\e[\sup_{0 \leq s \leq t}|M_s^{\delta, \kappa}|] 
&\leq \left\{ \begin{array}{ll}
\displaystyle \frac{1}{2}\e[V_t]
 + C \varepsilon_1^{2 \alpha /(2\alpha +1)}
 + C || \sigma - \hat{\sigma} ||_2 &\textit{if } \alpha \in (0, 1/2], \\
\displaystyle \frac{C}{\sqrt{\log(1/\varepsilon_1)}} &\textit{if } \alpha = 0.
\end{array}\right.
\end{align*}
\end{Lem}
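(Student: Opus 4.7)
The plan is to apply the Burkholder-Davis-Gundy (BDG) inequality to $M^{\delta,\kappa}$, decompose the integrand in the quadratic variation via $\sigma(X_s)-\hat\sigma(\hat X_s) = (\sigma(X_s)-\sigma(\hat X_s))+(\sigma(\hat X_s)-\hat\sigma(\hat X_s))$, and control the two resulting pieces using, respectively, the H\"older continuity of $\sigma$ combined with Theorem \ref{Main_1}, and Lemma \ref{Key_esti}.

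First I would apply BDG to obtain $\e[\sup_{s\le t}|M_s^{\delta,\kappa}|]\le C\e[\langle M^{\delta,\kappa}\rangle_t^{1/2}]$. Using $|\phi'_{\delta,\kappa}|\le 1$ from property (\ref{phi2}), assumption $A$-(iii), and $(a+b)^2\le 2a^2+2b^2$, the quadratic variation is bounded by $2K^2\int_0^t |Y_s|^{1+2\alpha}\,ds + 2\int_0^t |\sigma(\hat X_s)-\hat\sigma(\hat X_s)|^2\,ds$. Applying $\sqrt{a+b}\le\sqrt a+\sqrt b$, concavity of the square root, and Lemma \ref{Key_esti} with $p=1$, the contribution of the second integral to $\e[\langle M^{\delta,\kappa}\rangle_t^{1/2}]$ is $\le C\|\sigma-\hat\sigma\|_2$, which matches the third term in the claimed inequality.

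For the first integral I would exploit $|Y_s|^{1+2\alpha}\le V_t^{2\alpha}|Y_s|$, giving $\sqrt{\int_0^t|Y_s|^{1+2\alpha}\,ds}\le V_t^{\alpha}\sqrt{W_t}$ where $W_t:=\int_0^t|Y_s|\,ds$. Cauchy-Schwarz and Jensen (valid because $2\alpha\le 1$) then yield $\e[V_t^{\alpha}W_t^{1/2}]\le \e[V_t]^{\alpha}\sqrt{\e[W_t]}$; and since every deterministic $s\le T$ is a stopping time in $\mathcal{T}$, Theorem \ref{Main_1} applied pointwise gives $\e[W_t]\le CT\varepsilon_1^{2\alpha/(2\alpha+1)}$ when $\alpha>0$ and $\e[W_t]\le CT/\log(1/\varepsilon_1)$ when $\alpha=0$. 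In the $\alpha=0$ case $V_t^{2\alpha}=1$, so no further work is needed and the bound $C/\sqrt{\log(1/\varepsilon_1)}$ appears directly. For $\alpha\in(0,1/2]$ I would finish with Young's inequality, tuned so that after multiplying by the BDG constant the coefficient of $\e[V_t]$ is exactly $\tfrac12$, while the surviving factor in $\varepsilon_1$ produces the claimed $\varepsilon_1^{2\alpha/(2\alpha+1)}$.

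I expect the main technical obstacle to be the careful bookkeeping in the Young/BDG step: one must choose the Young weights so that the BDG constant is absorbed into the prefactor $\tfrac12$, while simultaneously keeping the surviving power of $\varepsilon_1$ equal to the advertised $2\alpha/(2\alpha+1)$. A subsidiary point is to justify a priori that $\e[V_t]<\infty$ in order to move $\tfrac12\e[V_t]$ to the left-hand side; this follows from standard moment estimates for SDEs under $A$-(ii) and $A$-(iv) together with boundedness of $\phi'_{\delta,\kappa}$. Everything else reduces to routine estimates based on (\ref{phi2}), $(a+b)^2\le 2a^2+2b^2$, Lemma \ref{Key_esti}, and Theorem \ref{Main_1}.
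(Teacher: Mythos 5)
Your overall strategy is the same as the paper's: apply Burkholder--Davis--Gundy, split $\sigma(X_s)-\hat\sigma(\hat X_s)$ into $(\sigma(X_s)-\sigma(\hat X_s))+(\sigma(\hat X_s)-\hat\sigma(\hat X_s))$, treat the second piece with Jensen and Lemma~\ref{Key_esti} to get $C\|\sigma-\hat\sigma\|_2$, and treat the first via H\"older continuity, $V_t$, and Theorem~\ref{Main_1}. The one genuine variant is the factorization. You write $|Y_s|^{1+2\alpha}\le V_t^{2\alpha}|Y_s|$, apply Cauchy--Schwarz and Jensen to reach $\e[V_t]^{\alpha}\e[W_t]^{1/2}$, and only then use Young's inequality. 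The paper instead writes $|Y_s|^{1+2\alpha}\le V_t\,|Y_s|^{2\alpha}$, applies Young's inequality first (with squared terms) to peel off $\tfrac12\e[V_t]$, and then applies Jensen to $\int_0^T\e[|Y_s|^{2\alpha}]\,ds$. The two decompositions are equivalent in spirit and both are valid.

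However, be careful with your final claim that Young's inequality ``produces the claimed $\varepsilon_1^{2\alpha/(2\alpha+1)}$.'' Starting from $\e[V_t]^{\alpha}\e[W_t]^{1/2}\le C\e[V_t]^{\alpha}\varepsilon_1^{\alpha/(2\alpha+1)}$, the Young split with exponents $1/\alpha$ and $1/(1-\alpha)$ yields $\tfrac12\e[V_t]+C\varepsilon_1^{\alpha/((2\alpha+1)(1-\alpha))}$, and for $\alpha\in(0,1/2)$ one has $\alpha/((2\alpha+1)(1-\alpha))<2\alpha/(2\alpha+1)$, so (since $\varepsilon_1<1$) the resulting term is \emph{larger} than $C\varepsilon_1^{2\alpha/(2\alpha+1)}$; the exponents coincide only at $\alpha=1/2$. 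You do not actually recover the advertised exponent. For what it's worth, the paper's own route has a parallel imprecision: after its Young and Jensen steps it lands on $C\big(\int_0^T\e[|Y_s|]\,ds\big)^{2\alpha}\le C\varepsilon_1^{4\alpha^2/(2\alpha+1)}$, which is again weaker than the stated $\varepsilon_1^{2\alpha/(2\alpha+1)}$ when $\alpha<1/2$. So your proposal and the paper are in the same boat; but since you explicitly assert that the target exponent comes out, you should either re-derive it honestly (and report the exponent you actually obtain) or argue separately why the weaker exponent suffices for the downstream use of this lemma in Theorem~\ref{Main_2}.
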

\begin{proof}
From the Burkholder-Davis-Gundy's inequality, there exists a positive constant $C$ such that
\begin{align*}
      \e[\sup_{0 \leq s \leq t}|M_s^{\delta, \kappa}|]
 &\leq C\e[\langle M^{\delta, \kappa}\rangle_t^{1/2}]
 = C\e\left[\left(\int_0^t |{\sigma}(X_s) -  \hat{\sigma}(\hat{X}_s)|^2 ds \right)^{1/2} \right]\\
&\leq C\e\left[\left(\int_0^t |{\sigma}(X_s) -  {\sigma}(\hat{X}_s)|^2 ds \right)^{1/2} \right]
+ C\e\left[\left(\int_0^T |{\sigma}(\hat{X}_s) - \hat{\sigma}(\hat{X}_s)|^2 ds \right)^{1/2} \right].
\end{align*}
From Jensen's inequality and Lemma \ref{Key_esti}, we have	
\begin{align*}
       C\e\left[\left(\int_0^T |{\sigma}(\hat{X}_s) - \hat{\sigma}(\hat{X}_s)|^2 ds \right)^{1/2} \right]
& \leq C\left(\int_0^T \e\left[ |{\sigma}(\hat{X}_s) - \hat{\sigma}(\hat{X}_s)|^2  \right] ds\right)^{1/2} \leq C || \sigma - \hat{\sigma} ||_2.
\end{align*}
Since $\sigma$ is $1/2+\alpha$-H\"older continuous
\begin{align}\label{esti_mart_sig_1}
      \e[\sup_{0 \leq s \leq t}|M_s^{\delta, \kappa}|] 
&\leq C\e\left[\left(\int_0^t | Y_s|^{1+2\alpha} ds \right)^{1/2} \right]
+C || \sigma - \hat{\sigma} ||_2.
\end{align}

If $\alpha \in (0,1/2]$, then we get
\begin{align*}
 C\e\left[\left(\int_0^t | Y_s|^{1+2\alpha} ds \right)^{1/2} \right]
 \leq C\e\left[V_t^{1/2} \left(\int_0^t | Y_s|^{2\alpha} ds \right)^{1/2} \right].
\end{align*}
Using the Young's inequality $xy \leq \frac{x^2}{2C} + \frac{Cy^2}{2}$ for any $x,y\geq 0$ and the Jensen's inequality, we obtain
\begin{align*} 
 C\e\left[\left(\int_0^t | Y_s|^{1+2\alpha} ds \right)^{1/2} \right]
& \leq \frac{1}{2}\e[V_t] + C\int_0^T \e[|Y_s|^{2\alpha}]ds
  \leq \frac{1}{2}\e[V_t] + C\left(\int_0^T \e[|Y_s|]ds \right)^{2\alpha}. \nonumber
\end{align*}
From Theorem \ref{Main_1} with $\tau=s$, we have
\begin{align}\label{esti_mart_sig_3}
C\e\left[\left(\int_0^t | Y_s|^{1+2\alpha} ds \right)^{1/2} \right]
& \leq \frac{1}{2}\e[V_t] + C \varepsilon_1^{2 \alpha /(2\alpha +1)}.
\end{align}
Therefore from (\ref{esti_mart_sig_1}) and (\ref{esti_mart_sig_3}), we get
\begin{align*}
     \e[\sup_{0 \leq s \leq t}|M_s^{\delta, \kappa}|] 
\leq \frac{1}{2}\e[V_t]
 + C \varepsilon_1^{2 \alpha /(2\alpha +1)}
 + C || \sigma - \hat{\sigma} ||_2,
\end{align*}
which concludes the statement for $\alpha \in (0,1/2]$.

If $\alpha =0$, then from Jensen's inequality and Theorem \ref{Main_1} with $\tau=s$, we get
\begin{align*}
C\e\left[\left(\int_0^t | Y_s| ds \right)^{1/2} \right]
\leq  C\left(\int_0^T \e[| Y_s| ] ds\right)^{1/2}
\leq \frac{C}{\sqrt{\log(1/\varepsilon_1)}}.
\end{align*}
Therefore we have
\begin{align*}
\e[\sup_{0 \leq s \leq T}|M_s^{\delta, \kappa}|] 
\leq \frac{C}{\sqrt{\log(1/\varepsilon_1)}} + C || \sigma - \hat{\sigma} ||_2
\leq \frac{C}{\sqrt{\log(1/\varepsilon_1)}}.
\end{align*}
This concludes the statement for $\alpha=0$.
\end{proof}

Using the above estimate, we can prove Theorem \ref{Main_2}.

\begin{proof}[Proof of Theorem \ref{Main_2}]
From (\ref{Ito_1}), (\ref{esti_main_1}), (\ref{esti_main_2_5}), (\ref{esti_main_3}) and (\ref{esti_main_4}), we have
\begin{align}\label{esti_V_1}
  V_t
&\leq  \kappa
 + L \int_0^t V_s ds
 + \int_0^T |{b}(\hat{X}_s) - \hat{b}(\hat{X}_s)| ds \nonumber \\
&+ \frac{2\delta}{\kappa \log \delta} \int_0^T | {\sigma}(\hat{X}_s) -  \hat{\sigma}(\hat{X}_s)|^2ds 
 + \frac{C\kappa^{2\alpha}}{\log \delta}
 + \sup_{0\leq s \leq t} |M_s^{\delta, \kappa}|.
\end{align}

Let $\alpha \in (0, 1/2]$.
From (\ref{esti_V_1}), Lemma \ref{Key_esti} and Lemma \ref{Mart_L1}, we have
\begin{align*}
 \e[V_t]
 &	\leq \kappa
 + L\int_0^t \e[V_s] ds
 + C|| b - \hat{b} ||_1
 + \frac{C \delta }{\kappa \log \delta} || \sigma - \hat{\sigma} ||_2^2
 + \frac{C \kappa^{2\alpha}}{\log \delta} \\
 &+ \frac{1}{2} \e[V_t]
 + C \varepsilon_1^{2 \alpha /(2\alpha +1)}
 + C || \sigma - \hat{\sigma} ||_2\\
 & \leq \kappa
 + L\int_0^t \e[V_s] ds
 + C \varepsilon_1^{1/2}
 + \frac{C \delta }{\kappa \log \delta} \varepsilon_1
 + \frac{C \kappa^{2\alpha}}{\log \delta}
 + \frac{1}{2} \e[V_t]
 + C \varepsilon_1^{2 \alpha /(2\alpha +1)}.
\end{align*}
Hence we get
\begin{align*}
 \e[V_t]
 & \leq 2\kappa
 + 2L\int_0^t \e[V_s] ds
 + C \varepsilon_1^{1/2}
 + \frac{C \delta }{\kappa \log \delta} \varepsilon_1
 + \frac{C \kappa^{2\alpha}}{\log \delta}
 + C \varepsilon_1^{2 \alpha /(2\alpha +1)}.
\end{align*}
Note that $\alpha \leq 2 \alpha /(2\alpha +1) \leq 1/2$.
Taking $\delta =2$ and $\kappa=\varepsilon_1^{1/2}$, we have
\begin{align*}
 \e[V_t]
 &\leq 2L\int_0^t \e[V_s] ds
 + C \varepsilon_1^{1/2}
 + C \varepsilon_1^{\alpha}
 + C \varepsilon_1^{2 \alpha /(2\alpha +1)}
 \leq 2L\int_0^t \e[V_s] ds
 + C \varepsilon_1^{\alpha}.
\end{align*}
By Gronwall's inequality, we obtain
\begin{align*}
 \e[V_t]
 \leq C \varepsilon_1^{\alpha}.
\end{align*}

Let $\alpha = 0$.
From (\ref{esti_V_1}), Lemma \ref{Key_esti} and Lemma \ref{Mart_L1}, we have
\begin{align*}
\e[V_t]
 &	\leq \kappa
 + L\int_0^t \e[V_s] ds
 + C\varepsilon_1
 + \frac{C \delta }{\kappa \log \delta} \varepsilon_1
 + \frac{C}{\log \delta} 
 + \frac{C}{\sqrt{\log(1/\varepsilon_1)}}
\end{align*}
Taking $\delta = \varepsilon_1^{-1/2}$ and $\kappa = 1/\log (1/\varepsilon_1)$, we have
\begin{align*}
 \e[V_t]
 \leq L\int_0^t \e[V_s] ds
 + \frac{C}{\sqrt{\log(1/\varepsilon_1)}}.
\end{align*}
By Gronwall's inequality, we obtain
\begin{align*}
 \e[V_t]
 \leq \frac{C}{\sqrt{\log(1/\varepsilon_1)}}.
\end{align*}
Hence we conclude the proof of Theorem \ref{Main_2}.
\end{proof}

\subsection{Proof of Theorem \ref{Main_3}}
In this section, we also estimate the expectation of $\sup_{0 \leq s \leq t}|M_s^{\delta, \kappa}|^p$ for any $p \geq 2$, $t\in [0,T]$, $\delta \in (1,\infty)$ and $\kappa \in (0,1)$. 

\begin{Lem}\label{Mart_Lp}
Let $p \geq 2$. Assume that $A$-(ii), $A$-(iii), $A$-(iv) and $A$-($p$) hold.
Then there exists a constant $C$ such that for any $t\in [0,T]$, $\delta \in (1,\infty)$ and $\kappa \in (0,1)$,
\begin{align*}
2^{5(p-1)} \e[\sup_{0 \leq s \leq t}|M_s^{\delta, \kappa}|^p] 
&\leq C\e\left[\left(\int_0^t |Y_s|^{1+2\alpha} ds \right)^{p/2} \right] 
  + C|| \sigma - \hat{\sigma} ||_{2p}^{p}.
\end{align*}
In particular, if $\alpha = 1/2$, we have
\begin{align*}
2^{5(p-1)} \e[\sup_{0 \leq s \leq t}|M_s^{\delta, \kappa}|^p] 
\leq \frac{1}{2}\e[V_t^p]
 + C \int_0^t \e[ V_s^{p}] ds
 + C|| \sigma - \hat{\sigma} ||_{2p}^{p}.
\end{align*}
\end{Lem}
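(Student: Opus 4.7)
I would follow the same template as Lemma~\ref{Mart_L1}, but with the $L^1$ Burkholder--Davis--Gundy inequality replaced by its $L^p$ version, so that the quadratic variation is raised to the $p/2$-th power, and then I would invoke Lemma~\ref{Key_esti} to control the part depending on $\sigma-\hat{\sigma}$.

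\textbf{Step 1: BDG and splitting of the integrand.} Applying BDG to $M^{\delta,\kappa}$ and dropping $|\phi'_{\delta,\kappa}|\leq 1$ by (\ref{phi2}) gives
\begin{equation*}
\e\!\left[\sup_{0\leq s\leq t}|M_s^{\delta,\kappa}|^p\right]\leq c_p\,\e\!\left[\left(\int_0^t|\sigma(X_s)-\hat{\sigma}(\hat{X}_s)|^2\,ds\right)^{p/2}\right].
\end{equation*}
Inserting $\pm\sigma(\hat{X}_s)$, then using $|a+b|^2\leq 2a^2+2b^2$ followed by the elementary inequality $(u+v)^{p/2}\leq 2^{p/2-1}(u^{p/2}+v^{p/2})$ (valid for $p\geq 2$), splits the right-hand side into a \emph{path term} involving $|\sigma(X_s)-\sigma(\hat{X}_s)|^2$ and a \emph{coefficient term} involving $|\sigma(\hat{X}_s)-\hat{\sigma}(\hat{X}_s)|^2$.

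\textbf{Step 2: Path term via H\"older continuity.} By $A$-(iii), $|\sigma(X_s)-\sigma(\hat{X}_s)|^2\leq K^2|Y_s|^{1+2\alpha}$, so the path term contributes at most $C\,\e\!\left[\left(\int_0^t|Y_s|^{1+2\alpha}ds\right)^{p/2}\right]$, which is exactly the first term on the right-hand side of the claim.

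\textbf{Step 3: Coefficient term via time-H\"older and Lemma~\ref{Key_esti}.} Apply H\"older's inequality in the time variable with exponents $p$ and $p/(p-1)$ (legitimate since $p\geq 2$):
\begin{equation*}
\left(\int_0^t|\sigma(\hat{X}_s)-\hat{\sigma}(\hat{X}_s)|^2\,ds\right)^{p/2}\leq T^{(p-1)/2}\left(\int_0^t|\sigma(\hat{X}_s)-\hat{\sigma}(\hat{X}_s)|^{2p}\,ds\right)^{1/2}.
\end{equation*}
Taking expectation, applying Jensen's inequality to pass the square root through $\e$, and then Lemma~\ref{Key_esti} with exponent $2p$, gives a bound by $C\,||\sigma-\hat{\sigma}||_{2p}^{p}$. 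Choosing the prefactor $2^{5(p-1)}$ sufficiently large to absorb the accumulated constants ($c_p$, the splitting factors $2^{p-1}$, and the $T$-powers) completes the first inequality.

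\textbf{Step 4: The case $\alpha=1/2$.} Now $|Y_s|^{1+2\alpha}=|Y_s|^2\leq V_t\,|Y_s|$, so
\begin{equation*}
\left(\int_0^t|Y_s|^2\,ds\right)^{p/2}\leq V_t^{p/2}\left(\int_0^t|Y_s|\,ds\right)^{p/2}.
\end{equation*}
Using Young's inequality $xy\leq\varepsilon x^2+y^2/(4\varepsilon)$ with $x=V_t^{p/2}$ and $y=(\int_0^t|Y_s|ds)^{p/2}$, together with Jensen's inequality $\left(\int_0^t|Y_s|ds\right)^p\leq T^{p-1}\int_0^t V_s^p\,ds$, converts this contribution into $\varepsilon\,\e[V_t^p]+C\varepsilon^{-1}\int_0^t\e[V_s^p]\,ds$. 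Choosing $\varepsilon$ so that, after multiplication by $2^{5(p-1)}$ and the path-term constant of Step~2, the coefficient of $\e[V_t^p]$ equals exactly $\tfrac{1}{2}$ yields the second claim.

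\textbf{Main obstacle.} The proof is largely an $L^p$ upgrade of Lemma~\ref{Mart_L1}, so the only delicate point is the constant bookkeeping in Step~4: the Young parameter $\varepsilon$ must be tuned so that the coefficient of $\e[V_t^p]$ is precisely $\tfrac{1}{2}$, since this is what later lets the $\e[V_t^p]$ term be absorbed on the left in the Gronwall step used to prove Theorem~\ref{Main_3}. The explicit prefactor $2^{5(p-1)}$ on the left of the lemma is exactly what makes this tuning possible uniformly in $p\geq 2$.
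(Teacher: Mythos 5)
Your proposal is correct and follows essentially the same route as the paper's proof: BDG, insert $\pm\sigma(\hat{X}_s)$ and split, bound the path term by H\"older continuity of $\sigma$, bound the coefficient term via Lemma~\ref{Key_esti}, and for $\alpha=1/2$ peel off $V_t^{p/2}$ and apply Young plus Jensen. The only cosmetic difference is in Step~3, where you invoke H\"older in time followed by Jensen over the expectation while the paper phrases the same reduction purely in terms of Jensen's inequality; both deliver the bound $C\|\sigma-\hat\sigma\|_{2p}^p$.
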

\begin{proof}
From the Burkholder-Davis-Gundy's inequality, there exists a positive constant $C$ such that
\begin{align*}
     2^{5(p-1)} &\e[\sup_{0 \leq s \leq t}|M_s^{\delta, \kappa}|^p]
 \leq C\e[\langle M^{\delta, \kappa}\rangle_t^{p/2}]
 \leq C\e\left[\left(\int_0^t | {\sigma}(X_s) - \hat{\sigma}(\hat{X}_s)|^2 ds \right)^{p/2} \right]\\
&\leq C\e\left[\left(\int_0^t | {\sigma}(X_s) - {\sigma}(\hat{X}_s)|^2 ds \right)^{p/2} \right]
+ C\e\left[\left(\int_0^T | {\sigma}(\hat{X}_s) - \hat{\sigma}(\hat{X}_s)|^2 ds \right)^{p/2} \right].
\end{align*}
From Jensen's inequality and Lemma \ref{Key_esti}, we have
\begin{align*}
\e\left[\left(\int_0^T | {\sigma}(\hat{X}_s) - \hat{\sigma}(\hat{X}_s)|^2 ds \right)^{p/2} \right]
\leq C \e\left[ \int_0^T | {\sigma}(\hat{X}_s) - \hat{\sigma}(\hat{X}_s)|^{2p} ds \right]^{1/2} 
 \leq C || \sigma - \hat{\sigma} ||_{2p}^{p}
\end{align*}
Since $\sigma$ is $1/2+\alpha$-H\"older continuous, 
\begin{align*}
     &2^{5(p-1)} \e[\sup_{0 \leq s \leq t}|M_s^{\delta, \kappa}|^p] 
\leq C\e\left[\left(\int_0^t |Y_s|^{1+2\alpha} ds \right)^{p/2} \right] 
  + C|| \sigma - \hat{\sigma} ||_{2p}^{p}
\end{align*}

In particular, if $\alpha = 1/2$, then we get from definition of $V_t$,
\begin{align*}
 C\e\left[\left(\int_0^t | Y_s|^{2} ds \right)^{p/2} \right]
 \leq C\e\left[ \left(V_t \right)^{p/2} \left(\int_0^t	| Y_s|ds \right)^{p/2} \right].
\end{align*}
Using a Young's inequality $xy \leq \frac{x^2}{2C} + \frac{Cy^2}{2}$ for any $x,y\geq 0$ and a Jensen's inequality, we obtain
\begin{align*}
 C\e\left[\left(\int_0^t | Y_s|ds \right)^{p/2} \right]
& \leq \frac{1}{2}\e[V_t^p]
 + C\e\left[ \left(\int_0^t |Y_s|ds\right)^{p} \right] 
 \leq \frac{1}{2}\e[V_t^p]
 + C \int_0^t \e[ V_s^{p}] ds,
\end{align*}
which concludes the statement.
\end{proof}

To prove Theorem \ref{Main_3}, we introduce the following Gronwall type inequality.	
\begin{Lem}[\cite{Gyongy} Lemma 3.2.-(ii)] \label{Lem2_1}\rm 
Let $(A_t)_{0 \leq t \leq T}$ be a nonnegative continuous stochastic process and set $B_t:= \sup_{0\leq s \leq t}A_s$. Assume that for some $r>0$, $q\geq 1$, $\rho \in [1,q]$ and $C_1, \xi \geq 0$, 
\begin{align*} 
\e[B_t^r] \leq C_1 \e\left[ \left( \int_0^t B_s ds \right)^r\right] + C_1 \e\left[ \left( \int_0^t A^{\rho}_s ds \right)^{r/q} \right] +\xi < \infty
\end{align*}
for all $t \in [0,T]$.
If $r\geq q$ or $q+1-\rho < r < q $ hold, then there exists constant $C_2$ depending on $r,q, \rho,T$ and $C_1$ such that
\begin{align*}
\e[B_T^r] \leq C_2 \xi + C_2 \int_0^T\e[A_s]ds.
\end{align*}
\end{Lem}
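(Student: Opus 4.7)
The plan is to reduce the two nonlinear terms on the right-hand side to linear expressions in $\e[B_s^r]$ and $\e[A_s]$, and then close by Gronwall's lemma. The first term $C_1\e[(\int_0^t B_s ds)^r]$ is easy: Jensen's inequality gives $\e[(\int_0^t B_s ds)^r]\le T^{r-1}\int_0^t\e[B_s^r]ds$. The interesting work is bounding the ``rate term'' $\e[(\int_0^t A_s^\rho ds)^{r/q}]$ from above by $\varepsilon\e[B_T^r]+C(\varepsilon)\int_0^T\e[A_s]ds$, which is legitimate to absorb on the left because the hypothesis already guarantees $\e[B_T^r]<\infty$.

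The core pointwise estimate is $A_s^\rho\le A_s B_T^{\rho-1}$, which uses only $\rho\ge 1$ and $A_s\le B_s\le B_T$. Integrating and raising to the $r/q$-th power,
\begin{align*}
\left(\int_0^t A_s^\rho ds\right)^{r/q}\le B_T^{(\rho-1)r/q}\left(\int_0^t A_s ds\right)^{r/q}.
\end{align*}
When $r\ge q$, Jensen's inequality $(\int A_s ds)^{r/q}\le T^{r/q-1}\int A_s^{r/q}ds$ and a second application of $A_s^{r/q}\le A_s B_T^{r/q-1}$ lift this to a bound of the form $CB_T^m\int_0^t A_s ds$ with $m:=\rho r/q-1$. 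Because $\rho\le q$ we have $m\le r-1<r$. In the alternative regime $q+1-\rho<r<q$, H\"older's inequality with exponents tailored to the same endpoint produces the analogous bound $CB_T^m\int_0^t A_s ds$ with $m$ strictly less than $r$; the strict inequality $r>q+1-\rho$ is exactly what rules out the fixed-point exponent $m=r-1$.

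With $CB_T^m\int_0^t A_s ds$ in hand, Young's inequality $B_T^m X\le\varepsilon B_T^r+C(\varepsilon)X^{r/(r-m)}$ decouples the factors, and the leftover term $(\int A_s ds)^{r/(r-m)}$ is rehandled by the identity $(\int A)^\gamma\le T^{\gamma-1}B_T^{\gamma-1}\int A$ (via Jensen and $A_s\le B_T$) followed by another application of Young. This transforms the exponent of $B_T$ via the map $m\mapsto m/(r-m)$, which is a strict contraction toward $0$ on $(0,r-1)$; finitely many rounds therefore suffice to reduce the residual exponent of $B_T$ below any prescribed threshold and close the estimate.

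The main obstacle is the quantitative bookkeeping of the constants along this iteration and, above all, guaranteeing that the starting exponent $m_0=\rho r/q-1$ stays strictly below the unstable fixed point $r-1$ of the map $m\mapsto m/(r-m)$, so that the contraction actually shrinks rather than stalls. This is precisely where the two-case hypothesis ``$r\ge q$ or $q+1-\rho<r<q$'' is invoked. Once the decomposition $\e[(\int_0^t A_s^\rho ds)^{r/q}]\le \tfrac{1}{2C_1}\e[B_T^r]+C\int_0^T\e[A_s]ds$ is in hand, substituting back, absorbing $\tfrac12\e[B_T^r]$ on the left, and invoking Gronwall's lemma on the resulting integral inequality in $\e[B_t^r]$ conclude the proof.
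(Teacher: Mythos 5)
Your high-level strategy --- Jensen for the $\left(\int_0^t B_s\,ds\right)^r$ term, the pointwise bound $A_s^\rho\le A_sB_T^{\rho-1}$, Young's inequality to decouple, absorb $\varepsilon\e[B_T^r]$ on the left (legitimate since $\e[B_T^r]<\infty$ by hypothesis), and Gronwall --- is the natural plan, and the paper itself gives no proof here (it only cites Gy\"ongy--R\'asonyi). But the iterative scheme you set up does not close.

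The map $m\mapsto m/(r-m)$ has fixed points $0$ (attracting) and $r-1$ (repelling). Iterates starting in $(0,r-1)$ decrease toward $0$ but never reach it in finitely many steps. After $N$ rounds you are left with a term $C_N\,B_T^{m_N}\int_0^T A_s\,ds$ with $m_N>0$, and the next Young step again produces $\left(\int_0^T A_s\,ds\right)^{r/(r-m_N)}$ with exponent strictly larger than $1$; there is no generic pointwise inequality bounding $\left(\int A\right)^\gamma$ by $\int A$ for $\gamma>1$, nor $B_T^{m}$ by a constant, so driving $m_N$ ``below any prescribed threshold'' does not finish the argument. You also never discuss whether the accumulated $\sum_k\varepsilon_k$ stays below $1/(2C_1)$ while the product of the Young constants $C(\varepsilon_k)$ stays finite, which would be needed even for an infinite iteration to make sense.

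There is a sharper problem: when $r\ge q$ the hypothesis permits $\rho=q$, and then $m_0=\rho r/q-1=r-1$ is \emph{exactly} the repelling fixed point, so your iteration does not move at all. Your assertion that the strict inequality $r>q+1-\rho$ ``rules out the fixed-point exponent $m=r-1$'' is relevant only in the regime $q+1-\rho<r<q$; it offers no protection when $r\ge q$, which you handle with the same iteration. Finally, the case $r<q$ is dispatched in one sentence (``H\"older's inequality with exponents tailored to the same endpoint'') without exhibiting the exponents, checking that they are admissible ($\ge 1$), or verifying that the resulting power of $B_T$ lies strictly below $r$. As it stands the proposal identifies the right reduction but the central claim --- that the rate term can be bounded by $\tfrac{1}{2C_1}\e[B_T^r]+C\int_0^T\e[A_s]\,ds$ --- is not established.
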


Now using Lemma \ref{Mart_Lp} and Lemma \ref{Lem2_1}, we can prove Theorem \ref{Main_3}.

\begin{proof}[Proof of Theorem \ref{Main_3}]
From (\ref{esti_V_1}) and the inequality $\left(\sum_{i=1}^m a_i \right)^p \leq 2^{(p-1)(m-1)}\sum_{i=1}^m a_i^p $ for any $p \geq 2$ $a_i>0$ and $m \in \n$, and Jensen's inequality we have
\begin{align*}
  V_t^p
&\leq 2^{5(p-1)} \Bigg( \kappa^p
 + \left(L \int_0^t V_s ds \right)^p
 + T^{p-1}\int_0^T |{b}(\hat{X}_s) - \hat{b}(\hat{X}_s)|^p ds \nonumber \\
&+ \frac{2T^{p-1}\delta^p}{\kappa^p (\log \delta)^p} \int_0^T | {\sigma}(\hat{X}_s) -  \hat{\sigma}(\hat{X}_s)|^{2p}ds 
 + \frac{C\kappa^{2p\alpha}}{(\log \delta)^p}
 + \sup_{0\leq s \leq t} |M_s^{\delta, \kappa}|^p \Bigg).
\end{align*}
From Lemma \ref{Key_esti} with $p \geq 2$, we have
\begin{align*}
 \e[V_t^p]
 &\leq C \kappa^p
 + C\e\left[\left( \int_0^t V_sds \right)^p \right] 
 + C|| b - \hat{b} ||_p^p \\
 &+ \frac{C\delta^p}{\kappa^p (\log \delta)^p} || \sigma - \hat{\sigma} ||_{2p}^{2p}
 + \frac{C\kappa^{2p\alpha}}{(\log \delta)^p}
 + 2^{5(p-1)}\e[\sup_{0\leq s \leq t} |M_s^{\delta, \kappa}|^p].
\end{align*}

If $\alpha = 1/2$,  using Lemma \ref{Mart_Lp}, we have
\begin{align*}
 \e[V_t^p]
 &\leq C \kappa^p
 + C \int_0^t \e[ V_s^{p}] ds
 + C|| b - \hat{b} ||_p^p \\
 &+ \frac{C\delta^p}{\kappa^p (\log \delta)^p} || \sigma - \hat{\sigma} ||_{2p}^{2p} 
 + \frac{C\kappa^{p}}{(\log \delta)^p}
 + \frac{1}{2}\e[V_T^p]
 + C|| \sigma - \hat{\sigma} ||_{2p}^{p}.
\end{align*}
Hence we get
\begin{align*}
 \e[V_t^p]
 &\leq C \kappa^p
 + C \int_0^t \e[ V_s^{p}] ds
 + C|| b - \hat{b} ||_p^p 
 + \frac{C\delta^p}{\kappa^p (\log \delta)^p} || \sigma - \hat{\sigma} ||_{2p}^{2p}
 + \frac{C\kappa^{p}}{(\log \delta)^p}
 + C|| \sigma - \hat{\sigma} ||_{2p}^{p} \\
 &\leq C \kappa^p
 + C \int_0^t \e[ V_s^{p}] ds
 + C \varepsilon_p
 + \frac{C\delta^p}{\kappa^p (\log \delta)^p} \varepsilon_p
 + \frac{C\kappa^{p}}{(\log \delta)^p}
 + C \varepsilon_p^{1/2}.
\end{align*}
Taking $\delta=2$ and $\kappa = \varepsilon_p^{1/(2p)}$, we have
\begin{align*}
 \e[V_t^p]
 \leq C\int_0^t \e[V_s^p] ds
 + C \varepsilon_p^{1/2}.
\end{align*}
By Gronwall's inequality, we obtain
\begin{align*}
 \e[V_t^p]
 \leq C \varepsilon_p^{1/2}.
\end{align*}

If $\alpha \in [0, 1/2)$, using Lemma \ref{Mart_Lp}, we have
\begin{align*}
 \e[V_t^p]
 &\leq C \kappa^p
 + C\e\left[\left( \int_0^t V_sds \right)^p \right] 
 + C|| b - \hat{b} ||_p^p \\
 &+ \frac{C\delta^p}{\kappa^p (\log \delta)^p} || \sigma - \hat{\sigma} ||_{2p}^{2p}
 + \frac{C\kappa^{2p\alpha}}{(\log \delta)^p}
 + C\e\left[\left(\int_0^t |Y_s|^{1+2\alpha} ds \right)^{p/2} \right] 
   + C|| \sigma - \hat{\sigma} ||_{2p}^{p} \\
 & \leq 
   C\e\left[\left( \int_0^t V_sds \right)^p \right] 
 + C\e\left[\left(\int_0^t |Y_s|^{1+2\alpha} ds \right)^{p/2} \right]
 + \kappa^p
 + C \varepsilon_p^{1/2}
 + \frac{C\delta^p}{\kappa^p (\log \delta)^p} \varepsilon_p
 + \frac{C\kappa^{2p\alpha}}{(\log \delta)^p}.
\end{align*}
Using the Lemma \ref{Lem2_1} with $r=p$, $q=2$, $\rho=1+2\alpha$ and
\begin{align*}
 \xi =
 \kappa^p
 + C \varepsilon_p^{1/2}
 + \frac{C\delta^p}{\kappa^p (\log \delta)^p} \varepsilon_p
 + \frac{C\kappa^{2p\alpha}}{(\log \delta)^p},
\end{align*}
we have from Theorem \ref{Main_1} with $\tau=s$, 
\begin{align*}
 \e[V_T^p]
 &\leq 
 C \kappa^p
 + C \varepsilon_p^{1/2}
 + \frac{C\delta^p}{\kappa^p (\log \delta)^p} \varepsilon_p 
 + \frac{C\kappa^{2p\alpha}}{(\log \delta)^p}
 + C \int_0^T \e[|Y_s|] ds\\
 &\leq 
 C \kappa^p
 + C \varepsilon_p^{1/2}
 + \frac{C\delta^p}{\kappa^p (\log \delta)^p} \varepsilon_p
 + \frac{C\kappa^{2p\alpha}}{(\log \delta)^p}
 +\left\{ \begin{array}{ll}
\displaystyle C\varepsilon_1^{2\alpha/(2\alpha+1)} &\textit{if } \alpha \in (0, 1/2), \\
\displaystyle \frac{C}{\log(1/\varepsilon_1)} &\textit{if } \alpha = 0.
\end{array}\right.
\end{align*}
Taking $\delta=2$ and $\kappa = \varepsilon_p^{1/(2p)}$ if $\alpha \in (0, 1/2)$ and $\delta=\varepsilon_1^{-1/2}$ and $\kappa=1/\log(1/\varepsilon_1)$ if $\alpha = 0$, we get
\begin{align*}
 \e[V_T^p]
 \leq \left\{ \begin{array}{ll}
\displaystyle C\varepsilon_1^{2\alpha/(2\alpha+1)} &\textit{if } \alpha \in (0, 1/2), \\
\displaystyle \frac{C}{\log(1/\varepsilon_1)} &\textit{if } \alpha = 0.
\end{array}\right.
\end{align*}
Hence we conclude the proof of Theorem \ref{Main_3}.
\end{proof}

\subsection{Proof of Corollary \ref{Main_5}}

To prove Corollary \ref{Main_5}, we introduce the upper bound for $\e[| g(X) - g(\hat{X})|^r]$ where $g$ is a function of bounded variation, $r \geq 1$, $X$ and $\hat{X}$ are random variables.

\begin{Prop}[\cite{Av}, Theorem 4.3] \label{Avi_ineq}\rm 
Let $X$ and $\hat{X}$ be random variables.
Assume that $X$ has a bounded density $p_X$.
If $g \in BV$ and $r \geq 1 $, then for every $q \geq 1 $, we have
\begin{align*}
\e[| g(X) - g(\hat{X})|^r] \leq 3^{r+1}V(g)^r \left(\sup_{x \in \real} p_X(x)\right)^{\frac{q}{q+1}} \e[|X-\hat{X}|^q]^{1/(q+1)}.
\end{align*}
\end{Prop}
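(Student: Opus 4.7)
The plan is a threshold-truncation argument: split $\e[|g(X)-g(\hat X)|^r]$ according to whether $|X-\hat X|\leq\varepsilon$ or $|X-\hat X|>\varepsilon$ for a parameter $\varepsilon>0$ to be optimised, estimate the two contributions separately, and balance them over $\varepsilon$. On the large-deviation side I would use the uniform bound $|g(X)-g(\hat X)|\leq V(g)$ (immediate from the definition of $V(g)$ applied to any partition containing the points $X\wedge\hat X$ and $X\vee\hat X$) together with Markov's inequality, producing a contribution of order $V(g)^r\,\varepsilon^{-q}\,\e[|X-\hat X|^q]$.

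\textbf{Key step.} For the small-deviation side I would pass to a right-continuous modification of $g$ and associate to it the finite signed Borel measure $\mu_g$ determined by $g(b)-g(a)=\mu_g((a,b])$ and satisfying $|\mu_g|(\real)=V(g)$, so that
\begin{align*}
|g(x)-g(y)|\leq |\mu_g|\bigl((x\wedge y,\,x\vee y]\bigr)\quad\text{for all }x,y\in\real.
\end{align*}
On $\{|X-\hat X|\leq\varepsilon\}$ the interval between $X$ and $\hat X$ sits inside $(X-\varepsilon,X+\varepsilon]$, and combining this pointwise inequality with $|g(X)-g(\hat X)|^{r-1}\leq V(g)^{r-1}$ (which uses $r\geq 1$), Fubini, and the density estimate $\int_{y-\varepsilon}^{y+\varepsilon}p_X(x)\,dx\leq 2\varepsilon\sup_{x\in\real}p_X(x)$ yields
\begin{align*}
\e\bigl[|g(X)-g(\hat X)|^r{\bf 1}_{\{|X-\hat X|\leq\varepsilon\}}\bigr]
\leq V(g)^{r-1}\int_\real|\mu_g|(dy)\int_{y-\varepsilon}^{y+\varepsilon}p_X(x)\,dx
\leq 2\varepsilon\,V(g)^r\,\sup_{x\in\real}p_X(x).
\end{align*}

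\textbf{Optimisation and main obstacle.} Summing the two estimates produces
\begin{align*}
\e[|g(X)-g(\hat X)|^r]\leq V(g)^r\Bigl(2\varepsilon\sup_{x\in\real}p_X(x)+\varepsilon^{-q}\e[|X-\hat X|^q]\Bigr),
\end{align*}
and choosing $\varepsilon$ of order $\bigl(\e[|X-\hat X|^q]/\sup_{x\in\real}p_X(x)\bigr)^{1/(q+1)}$ to balance the two terms delivers a bound of exactly the required form $C\,V(g)^r(\sup p_X)^{q/(q+1)}\e[|X-\hat X|^q]^{1/(q+1)}$; a crude optimisation already gives $C$ of order $3$, and the slack up to $3^{r+1}$ in the statement is absorbed either by a power-mean step $|a+b|^r\leq 2^{r-1}(|a|^r+|b|^r)$ or by a preliminary Jordan decomposition $g=g_1-g_2$ into monotone components to which the monotone case is applied. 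The only substantive point of the proof is the translation of a BV function into a signed measure and the verification of the pointwise enclosure $|g(x)-g(y)|\leq|\mu_g|((x\wedge y,x\vee y])$; once this is in hand, everything else is elementary one-variable calculus. The bounded-density hypothesis enters at exactly the point where $\p(X\in I)\leq|I|\sup_{x\in\real}p_X(x)$ is used to control the expected $\mu_g$-mass of a random $\varepsilon$-neighbourhood of $X$, and this is why no density assumption on $\hat X$ is required.
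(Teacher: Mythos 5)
First, a point of comparison: the paper does not prove this proposition at all — it imports it verbatim from Avikainen [Av, Theorem 4.3] — so your argument has to stand on its own. Its truncation-and-optimisation skeleton is fine (and would in fact give the constant $3\le 3^{r+1}$), but there is a genuine gap at precisely the step you yourself call the only substantive one. The pointwise enclosure $|g(x)-g(y)|\le|\mu_g|\bigl((x\wedge y,\,x\vee y]\bigr)$, with $\mu_g$ the Lebesgue--Stieltjes measure of a right-continuous modification $\tilde g$ of $g$, is false for general $g\in BV$: take $g={\bf 1}_{\{0\}}$, whose right-continuous modification is $\tilde g\equiv 0$, hence $\mu_g=0$, while $|g(0)-g(1)|=1$. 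Nor is "passing to a right-continuous modification" harmless: $g$ and $\tilde g$ differ on the countable set of discontinuities of $g$, and since no regularity whatsoever is assumed on the law of $\hat X$, that law may charge exactly this set; in the example above with $\hat X\equiv 0$ one has $\e[|g(X)-g(\hat X)|^r]=1$ but $\e[|\tilde g(X)-\tilde g(\hat X)|^r]=0$, so the reduction loses a quantity of order $V(g)^r$ that your estimates never recover. (The stated inequality is still true in such cases, but only because its right-hand side then exceeds $V(g)^r$ — a mechanism your argument does not exhibit.)

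The gap is local and fixable without changing your architecture: work with the variation function $T_g$ of the paper's definition rather than with a modification of $g$. For $x<y$ one has $|g(y)-g(x)|\le T_g(y)-T_g(x)\le\nu([x,y])$, where $\nu$ is the Lebesgue--Stieltjes measure of the right-continuous version of the nondecreasing function $T_g$; this is a positive measure with $\nu(\real)=V(g)$ whose atoms do account for the full oscillation of $g$ at its discontinuity points (for ${\bf 1}_{\{0\}}$ it is $2\delta_0$). On $\{|X-\hat X|\le\varepsilon\}$ the closed interval between $X$ and $\hat X$ lies in $[X-\varepsilon,X+\varepsilon]$, and Fubini together with the bounded density of $X$ gives $\e\bigl[\nu([X-\varepsilon,X+\varepsilon])\bigr]\le 2\varepsilon V(g)\sup_{x\in\real}p_X(x)$, exactly as in your display; combined with $|g(X)-g(\hat X)|^{r-1}\le V(g)^{r-1}$ and your Markov estimate on $\{|X-\hat X|>\varepsilon\}$, the optimisation over $\varepsilon$ then goes through verbatim and yields the claimed bound. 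With that single repair your proof is complete and, incidentally, makes the paper more self-contained than the citation it currently relies on.
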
 

Using the above proposition, we can prove Corollary \ref{Main_5}.

\begin{proof}[Proof of Corollary \ref{Main_5}]
From Proposition \ref{GB_n_1}, the density $p_T(x_0,\cdot)$ of $X_T$ satisfies the Gaussian upper bound, i.e., there exists a positive constant $C$ such that for any $y \in \real$, 
\begin{align*}
p_T(x_0,y) \leq C p_{8\lambda}(T, x_0, y) \leq \frac{C}{\sqrt{2\pi (8 \lambda)T}}.
\end{align*}
This means that the density $p_T(x_0,\cdot)$ of $X_T$ is bounded. 
Hence from Proposition \ref{Avi_ineq} with $q=1$ and Theorem \ref{Main_1} with $\tau = T$, for any $g \in BV$ and $r \geq 1 $, we have
\begin{align*}
\e[|g(X_{T})-g(\hat{X}_{T})|^r] 
&\leq V(g)^r C \e[|X_T-\hat{X}_T|]^{1/2} 
\leq \left\{ \begin{array}{ll}
\displaystyle V(g)^{r} C \varepsilon_1^{\alpha/(2\alpha+1)} &\textit{if } \alpha \in (0, 1/2], \\
\displaystyle \frac{V(g)^{r} C }{\sqrt{\log (1/\varepsilon_1)}} &\textit{if } \alpha = 0,
\end{array}\right.
\end{align*}
which concludes the proof of statement.
\end{proof}

\section{Application to the stability problem}\label{Application}
In this section, we apply the main results to the stability problem.
For any $n\in \n$, we consider a one-dimensional stochastic differential equations
\begin{align}\label{SDE_n}
X_t^{(n)}=x_0+ \int_0^t b_n(X_s^{(n)})ds + \int_0^t \sigma_n(X_t^{(n)})dW_s.
\end{align}

\begin{Ass}\label{Ass_n}
We assume that the coefficients $b, \sigma$ and the sequence of coefficients $(b_n)_{n \in \n}$ and $(\sigma_n)_{n \in \n}$ satisfy the following conditions:
\begin{itemize}
\item[$A'$-(i)]: $b \in \mathcal{L}$.
\item[$A'$-(ii)]: $b$ and $b_n$ are bounded measurable i.e., there exists $K > 0$ such that
\begin{align*} 
\sup_{n \in \n, x \in \real }\left( |b_n(x)| \vee |b(x)| \right)
\leq K.
\end{align*}
\item[$A'$-(iii)]: $\sigma$ and $\sigma_n$ are $\eta= 1/2+\alpha$-H\"older continuous with $\alpha \in [0, 1/2]$, i.e., there exists $K > 0$ such that 
\begin{align*} 
\sup_{n \in \n, x,y \in \real , x\neq y} \left( \frac{|\sigma(x)-\sigma	(y)|}{|x-y|^{\eta}} \vee \frac{|\sigma_n(x)-\sigma_n(y)|}{|x-y|^{\eta}} \right)
\leq K.
\end{align*}
\item[$A'$-(iv)]: $a=\sigma$ and $a_n:= \sigma_n^2$ are bounded and uniformly elliptic, i.e., there exists $\lambda \geq 1$ such that for any $x \in \real$ and $n \in \n$, 
\begin{align*}
 \lambda^{-1} \leq a(x) \leq \lambda \textit{ and } \lambda^{-1} \leq a_n(x) \leq \lambda.
\end{align*}
\item[$A'$-($p$)]: For given $p>0$,
\begin{align*}
  \varepsilon_{p,n}:=||b-b_n||_p^{p} \vee ||\sigma-\sigma_n||_{2p}^{2p} \to 0
\end{align*}
as $n \to \infty$.
\end{itemize}
\end{Ass}

From the main results Theorem \ref{Main_1}, \ref{Main_2}, \ref{Main_3} and Corollary \ref{Main_4}, \ref{Main_5}, we have the following corollaries.

\begin{Cor} \label{Main_1_n}\rm 
Assume that Assumption \ref{Ass_n} with $p=1$ hold.
Then there exists a positive constant $C$ such that for any $n$ with $\varepsilon_{1,n} <1$, 
\begin{align*}
\sup_{\tau \in \mathcal{T}}\e[|X_{\tau}-X^{(n)}_{\tau}|] 
&\leq \left\{ \begin{array}{ll}
\displaystyle C\varepsilon_{1,n}^{2\alpha/(2\alpha+1)} &\textit{if } \alpha \in (0, 1/2], \\
\displaystyle \frac{C}{\log (1/\varepsilon_{1,n})} &\textit{if } \alpha = 0.
\end{array}\right.
\end{align*}
\end{Cor}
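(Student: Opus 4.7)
The plan is to apply Theorem \ref{Main_1} directly, viewing the pair of SDEs \eqref{SDE_1} and \eqref{SDE_n} as an instance of \eqref{SDE_1} and \eqref{SDE_hat} with the identification $(\hat b, \hat \sigma) = (b_n, \sigma_n)$. So the proof is a matter of verifying, for each admissible $n$, that Assumption \ref{Ass_1} with $p=1$ holds for this identification, and then invoking Theorem \ref{Main_1} on the nose.

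First I would translate Assumption \ref{Ass_n} into Assumption \ref{Ass_1} term by term. Conditions $A'$-(i), $A'$-(ii), $A'$-(iii), $A'$-(iv) coincide, upon the substitution $\hat b = b_n$ and $\hat \sigma = \sigma_n$, with $A$-(i)--(iv); crucially the constants $K$ and $\lambda$ appearing there are uniform in $n$, which is built into $A'$. For condition $A$-($1$) I would use the hypothesis that $\varepsilon_{1,n} < 1$ directly, so no additional work is needed; when $\alpha = 0$ one also needs $1/\log(1/\varepsilon_{1,n}) < 1$, which again is implied by $\varepsilon_{1,n}$ being sufficiently small (and one may further restrict attention to such $n$, since by $A'$-($1$) we have $\varepsilon_{1,n} \to 0$).

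Applying Theorem \ref{Main_1} then yields the desired bound. The only point worth emphasizing is that the constant $C$ produced by Theorem \ref{Main_1} depends only on $K, L, T, \alpha, \lambda, x_0$, all of which are independent of $n$ by Assumption \ref{Ass_n}. Hence the very same constant $C$ works simultaneously for all $n$ with $\varepsilon_{1,n} < 1$ (and $1/\log(1/\varepsilon_{1,n}) < 1$ when $\alpha = 0$), giving the uniform-in-$n$ estimate in the statement.

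There is no real obstacle: this corollary is a bookkeeping consequence of Theorem \ref{Main_1}. The only subtlety, if any, is to make the $n$-independence of $C$ explicit by tracing which quantities the constants in Section~3 depend upon; this is immediate from the structure of the proofs, since each occurrence of a constant involves only $K, L, T, \alpha, \lambda, x_0$ and not the specific coefficients $b_n, \sigma_n$.
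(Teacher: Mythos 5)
Your proof is correct and follows exactly the route the paper intends: the corollary is stated in Section~4 as an immediate consequence of Theorem~\ref{Main_1} with the substitution $(\hat b,\hat\sigma)=(b_n,\sigma_n)$, exploiting that the constants $K,L,\lambda$ in Assumption~\ref{Ass_n} are uniform in $n$, so the constant $C$ from Theorem~\ref{Main_1} is $n$-independent. You also correctly flag the minor imprecision in the paper's statement that, when $\alpha=0$, the hypothesis $\varepsilon_{1,n}<1$ should really be strengthened to $1/\log(1/\varepsilon_{1,n})<1$ in order to invoke $A$-($1$); this holds for all sufficiently large $n$ by $A'$-($1$).
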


\begin{Cor} \label{Main_2_n}\rm 
Assume that Assumption \ref{Ass_n} with $p=1$ hold.
Then there exists a positive constant $C$ such that for any $n$ with $\varepsilon_{1,n} <1$, 
\begin{align*} 
\e[\sup_{0 \leq t \leq T}|X_t - X_t^{(n)}|] 
&\leq \left\{ \begin{array}{ll}
\displaystyle C\varepsilon_{1,n}^ {\alpha} &\textit{if } \alpha \in (0, 1/2], \\
\displaystyle \frac{C}{\sqrt{\log(1/\varepsilon_{1,n})}} &\textit{if } \alpha = 0.
\end{array}\right.
\end{align*}
\end{Cor}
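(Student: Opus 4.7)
The plan is to apply Theorem \ref{Main_2} directly to the pair $(X, X^{(n)})$ for each sufficiently large $n$, viewing $X^{(n)}$ as the role of $\hat{X}$ in (\ref{SDE_hat}) with $(\hat{b}, \hat{\sigma}) := (b_n, \sigma_n)$. The first step is to check that for each fixed $n$, Assumption \ref{Ass_n} implies Assumption \ref{Ass_1} with this identification and with $\varepsilon_1 := \varepsilon_{1,n}$. Indeed $A'$-(i) is identical to $A$-(i); $A'$-(ii) yields $A$-(ii) because the uniform-in-$n$ bound $K$ controls both $|b|$ and $|b_n|$; $A'$-(iii) yields $A$-(iii) with the same H\"older constant $K$; and $A'$-(iv) yields $A$-(iv) with the same ellipticity constant $\lambda$. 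Finally, the assumption $\varepsilon_{1,n} < 1$ gives the first part of $A$-($1$).

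The crucial point is that the constant $C$ appearing in Theorem \ref{Main_2} depends only on $K, L, T, p, \alpha, \lambda$ and $x_0$, all of which, by Assumption \ref{Ass_n}, can be chosen uniformly in $n$. Hence applying Theorem \ref{Main_2} to each pair $(X, X^{(n)})$ produces the stated estimate with a single constant $C$, independent of $n$.

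The only mild subtlety concerns the case $\alpha = 0$, where $A$-($1$) additionally requires $1/\log(1/\varepsilon_{1,n}) < 1$, i.e. $\varepsilon_{1,n} < 1/e$. Since $A'$-($1$) guarantees $\varepsilon_{1,n} \to 0$, this inequality holds for all $n$ larger than some $n_0$. For the finitely many $n < n_0$ satisfying $\varepsilon_{1,n} < 1$, the quantity $\e[\sup_{0 \leq t \leq T}|X_t - X_t^{(n)}|]$ is uniformly bounded (for instance by $2K T$ plus a martingale term controlled via the Burkholder--Davis--Gundy inequality and the uniform ellipticity bound), so by enlarging $C$ we may absorb those exceptional indices into the stated inequality. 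There is no main obstacle beyond this bookkeeping: the corollary is a direct specialization of Theorem \ref{Main_2} enabled by the uniformity built into Assumption \ref{Ass_n}.
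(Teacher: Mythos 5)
Your proposal is correct and follows the same route as the paper, which simply states that Corollary \ref{Main_2_n} is a direct consequence of Theorem \ref{Main_2} under Assumption \ref{Ass_n} without spelling out details. You correctly verify that each item of Assumption \ref{Ass_n} gives the corresponding item of Assumption \ref{Ass_1} with constants $K, L, \lambda$ uniform in $n$, which is exactly what makes the constant $C$ of Theorem \ref{Main_2} independent of $n$. You are also more careful than the paper about one point: when $\alpha=0$, Assumption \ref{Ass_1} $A$-($1$) additionally demands $1/\log(1/\varepsilon_1)<1$, i.e.\ $\varepsilon_1 < e^{-1}$, whereas the corollary as stated only assumes $\varepsilon_{1,n}<1$. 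Your fix, absorbing the finitely many indices with $e^{-1}\leq\varepsilon_{1,n}<1$ into a larger constant $C$ using the a priori bound on $\e[\sup_{0\leq t\leq T}|X_t-X^{(n)}_t|]$ from the uniform bounds on the coefficients, closes this gap cleanly; note one can also observe that for those $n$ the right-hand side $C/\sqrt{\log(1/\varepsilon_{1,n})}$ is at least $C$, so taking $C$ larger than the a priori bound suffices.
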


\begin{Cor} \label{Main_3_n}\rm 
Let $p \geq 2$. Assume that Assumption \ref{Ass_n} with $p$ hold.
Then there exists a positive constant $C$ such that for any $n$ with $\varepsilon_{p,n} <1$, 
\begin{align*} 
\e[\sup_{0 \leq t \leq T}|X_t - X_t^{(n)}|^p] 
&\leq \left\{ \begin{array}{ll}
\displaystyle C \varepsilon_{p,n}^{1/2}. &\textit{if } \alpha = 1/2, \\
\displaystyle C\varepsilon_{1,n}^{2\alpha/(2\alpha+1)} &\textit{if } \alpha \in (0, 1/2), \\
\displaystyle \frac{C}{\log(1/\varepsilon_{1,n})} &\textit{if } \alpha = 0.
\end{array}\right.
\end{align*}
\end{Cor}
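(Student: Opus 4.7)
The plan is to reduce Corollary \ref{Main_3_n} to a direct application of Theorem \ref{Main_3} with the substitutions $\hat{X}=X^{(n)}$, $\hat{b}=b_n$, $\hat{\sigma}=\sigma_n$, so that the quantity $\varepsilon_p$ of Assumption \ref{Ass_1} coincides with $\varepsilon_{p,n}$ of Assumption \ref{Ass_n}. The main task is just to check that Assumption \ref{Ass_n} (for the sequence version) implies, for each individual $n$, Assumption \ref{Ass_1} with coefficients $(b,\sigma)$ and $(b_n,\sigma_n)$, and then to observe that the constant produced by Theorem \ref{Main_3} does not depend on $n$.

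First I would verify the structural hypotheses $A$-(i) through $A$-(iv) hold with $(\hat{b},\hat{\sigma})=(b_n,\sigma_n)$. Conditions $A'$-(i)--$A'$-(iv) are literally the uniform-in-$n$ versions of $A$-(i)--$A$-(iv): the same constants $K$, $L$, $\lambda$ and the same H\"older exponent $\eta=1/2+\alpha$ work for every $n$. Consequently the constant $C$ produced by Theorem \ref{Main_3}, which depends only on $K, L, T, p, \alpha, \lambda, x_0$, is the \emph{same} constant for every $n\in\mathbb{N}$. Next I would handle the smallness condition $A$-($p$): by hypothesis $\varepsilon_{p,n}\to 0$, so for all $n$ such that $\varepsilon_{p,n}<1$ the first part of $A$-($p$) holds, and in the degenerate case $\alpha=0$ the auxiliary condition $1/\log(1/\varepsilon_{p,n})<1$ follows automatically for $n$ large enough, which is exactly the regime in which the statement of Corollary \ref{Main_3_n} is nontrivial.

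With both requirements of Assumption \ref{Ass_1} checked, applying Theorem \ref{Main_3} to the pair $(X,X^{(n)})$ yields the three-case bound
\begin{align*}
\mathbb{E}\Bigl[\sup_{0\leq t\leq T}|X_t-X^{(n)}_t|^p\Bigr]
&\leq \left\{\begin{array}{ll}
C\,\varepsilon_{p,n}^{1/2} & \text{if } \alpha=1/2,\\[2pt]
C\,\varepsilon_{1,n}^{2\alpha/(2\alpha+1)} & \text{if }\alpha\in(0,1/2),\\[2pt]
\dfrac{C}{\log(1/\varepsilon_{1,n})} & \text{if }\alpha=0,
\end{array}\right.
\end{align*}
which is exactly the assertion. (Note that, as in Theorem \ref{Main_3}, only the $\alpha=1/2$ case uses $\varepsilon_{p,n}$ rather than $\varepsilon_{1,n}$; but the hypothesis $A'$-($p$) with $p\geq 2$ implies $A'$-(1) via H\"older's inequality applied under the finite measure $e^{-|x-x_0|^2/(16\lambda T)}dx$, so $\varepsilon_{1,n}$ is also controlled.)

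Since the proof is essentially a bookkeeping exercise, there is no substantive obstacle; the only point that requires any attention is ensuring uniformity of the constant $C$ in $n$, which is immediate from the uniform-in-$n$ nature of $A'$-(ii)--$A'$-(iv).
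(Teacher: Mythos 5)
Your proof is correct and matches the paper's approach: the paper states Corollary \ref{Main_3_n} (and the other corollaries in Section \ref{Application}) as an immediate consequence of Theorem \ref{Main_3} without further argument. Your verification that the constant $C$ produced by Theorem \ref{Main_3} depends only on the uniform-in-$n$ quantities $K,L,T,p,\alpha,\lambda,x_0$, and that $\varepsilon_{1,n}$ is controlled from $\varepsilon_{p,n}$ via H\"older's inequality under the finite measure $e^{-|x-x_0|^2/(16\lambda T)}dx$, supplies exactly the bookkeeping the paper leaves implicit.
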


\begin{Cor} \label{Main_4_n}\rm 
Let $p \in (1,2)$. Assume that Assumption \ref{Ass_1} and Assumption \ref{Ass_n} with $2p$ hold.
Then there exists a positive constant $C$ such that for any $n$ with $\varepsilon_{2p,n} <1$, 
\begin{align*}
\e[\sup_{0 \leq t \leq T}|X_t - X_t^{(n)}|^p] 
&\leq \left\{ \begin{array}{ll}
\displaystyle C \varepsilon_{2p,n}^{1/2}. &\textit{if } \alpha = 1/2, \\
\displaystyle C\varepsilon_{1,n}^{\alpha/(2\alpha+1)} &\textit{if } \alpha \in (0, 1/2), \\
\displaystyle \frac{C}{\sqrt{\log(1/\varepsilon_{1,n})}} &\textit{if } \alpha = 0.
\end{array}\right.
\end{align*}
\end{Cor}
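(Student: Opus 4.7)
The plan is to recognize this as a direct specialization of Corollary \ref{Main_4} with $\hat{X}=X^{(n)}$, $\hat{b}=b_n$, and $\hat{\sigma}=\sigma_n$, applied for each $n$ separately. The substantive work is all contained in Theorem \ref{Main_3} and its Jensen-based extension to $p\in(1,2)$; here we merely have to check that the hypotheses carry over uniformly in $n$ and that the constant $C$ is $n$-independent.

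First I would fix $n$ and verify that Assumption \ref{Ass_1} (with parameter $2p$) holds for the pair of SDEs $(X, X^{(n)})$: condition $A$-(i) is immediate from $A'$-(i); $A$-(ii), $A$-(iii), and $A$-(iv) follow from $A'$-(ii), $A'$-(iii), $A'$-(iv) by taking $(b_n,\sigma_n)$ in the role of $(\hat{b},\hat{\sigma})$, noting that the constants $K$ and $\lambda$ supplied by Assumption \ref{Ass_n} are already uniform in $n$; and $A$-($2p$) is the hypothesis $\varepsilon_{2p,n}<1$ (with the additional requirement $1/\log(1/\varepsilon_{2p,n})<1$ in the $\alpha=0$ case, which holds for all sufficiently large $n$ since $\varepsilon_{2p,n}\to 0$, so one restricts attention to those $n$ and absorbs the finitely many exceptional $n$ into the constant).

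Next I would invoke Corollary \ref{Main_4} verbatim. Reading off its conclusion with $\varepsilon_p$ replaced by $\varepsilon_{p,n}$ yields exactly the three cases stated in Corollary \ref{Main_4_n}. The key observation is that the constant $C$ appearing in Corollary \ref{Main_4} depends, per the convention stated before Theorem \ref{Main_1}, only on $K$, $L$, $T$, $p$, $\alpha$, $\lambda$, $x_0$; all of these are fixed by Assumption \ref{Ass_n} and in particular are independent of $n$. Consequently the same $C$ works for every admissible $n$, giving the claimed uniform estimate.

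There is no real obstacle here beyond bookkeeping; the only mildly delicate point is the $\alpha=0$ case, where Assumption \ref{Ass_1} $A$-($p$) demands $1/\log(1/\varepsilon_{2p,n})<1$ while the corollary's hypothesis only requires $\varepsilon_{2p,n}<1$. Since the corollary is a statement about the asymptotic regime $n\to\infty$ and $\varepsilon_{2p,n}\to 0$, this is resolved by noting that the stronger condition holds for all $n$ large enough, and for the (finitely many) remaining $n$ with $\varepsilon_{2p,n}\in[1/e,1)$ the right-hand side $C/\sqrt{\log(1/\varepsilon_{1,n})}$ is bounded below by a positive constant, so the inequality is trivially maintained by enlarging $C$.
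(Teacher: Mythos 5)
Your proposal is correct and takes essentially the same route the paper intends: Corollary \ref{Main_4_n} is obtained by specializing Corollary \ref{Main_4} to the pair $(X, X^{(n)})$ and observing that the constant depends only on $K, L, T, p, \alpha, \lambda, x_0$, all of which Assumption \ref{Ass_n} makes uniform in $n$. The paper does not write out a separate proof for the corollaries in Section 4 (it simply states ``From the main results\ldots we have the following corollaries''), and your bookkeeping supplies exactly the missing verification. One small point worth tightening: in the $\alpha=0$ absorption step you bound the right-hand side from below for the finitely many exceptional $n$, but the right-hand side there involves $\varepsilon_{1,n}$ while the exceptional set is defined via $\varepsilon_{2p,n}$. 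Since for each such $n$ either $(b,\sigma)=(b_n,\sigma_n)$ (in which case both sides vanish) or $\varepsilon_{1,n}>0$, and there are only finitely many such indices, the argument still closes by enlarging $C$ to cover the maximum over that finite set — but the justification should go through $\varepsilon_{1,n}$ being a fixed positive number for each exceptional $n$, not through a uniform lower bound inherited from $\varepsilon_{2p,n}$.
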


\begin{Cor} \label{Main_5_n}\rm 
Assume that Assumption \ref{Ass_n} with $p=1$ hold.
Then there exists a positive constant $C$ such that for any $g \in BV$, $r \geq 1 $ and $n$ with $\varepsilon_{1,n} <1$, 
\begin{align*}
\e[|g(X_{T})-g(X^{(n)}_{T})|^r] 
&\leq \left\{ \begin{array}{ll}
\displaystyle V(g)^r C\varepsilon_{1,n}^{\alpha/(2\alpha+1)} &\textit{if } \alpha \in (0, 1/2], \\
\displaystyle \frac{V(g)^r C}{\sqrt{\log (1/\varepsilon_{1,n})}} &\textit{if } \alpha = 0.
\end{array}\right.
\end{align*}
\end{Cor}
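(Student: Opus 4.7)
The plan is to apply Corollary \ref{Main_5} directly, with the identifications $\hat X := X^{(n)}$, $\hat b := b_n$, $\hat\sigma := \sigma_n$, so that $\varepsilon_1 = \varepsilon_{1,n}$. The entire statement is essentially a relabeling of Corollary \ref{Main_5}, so the work lies only in checking that the hypotheses transfer uniformly in $n$.

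First I would verify that, under Assumption \ref{Ass_n}, each item of Assumption \ref{Ass_1} with $p = 1$ is satisfied for the pair $(b, b_n)$, $(\sigma, \sigma_n)$. Items $A$-(i) through $A$-(iv) are inherited verbatim from $A'$-(i) through $A'$-(iv), with constants $K$, $L$, $\lambda$ that are uniform in $n$. The condition $\varepsilon_{1,n} < 1$ is exactly the stated hypothesis. Crucially, the constant $C$ produced by Corollary \ref{Main_5} depends only on $K, L, T, r, \alpha, \lambda$ and the initial value $x_0$, so it is independent of $n$; applying the corollary to the pair $(X, X^{(n)})$ then yields exactly the stated bound with a single constant valid for all admissible $n$.

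The one small subtlety is the second clause of $A$-(1) in the case $\alpha = 0$, namely $1/\log(1/\varepsilon_{1,n}) < 1$ (equivalently $\varepsilon_{1,n} < 1/e$). By $A'$-(1) we have $\varepsilon_{1,n} \to 0$, so this holds for all $n$ beyond some $n_0$. For the finitely many remaining indices with $\varepsilon_{1,n} \in [1/e, 1)$, the quantity $\sqrt{\log(1/\varepsilon_{1,n})}$ is bounded below by a positive constant, so the right-hand side of the asserted bound is bounded below. After replacing $g$ by $g$ minus a suitable centering constant (which does not affect $g(X_T) - g(X_T^{(n)})$), the bounded variation hypothesis gives $\|g\|_\infty \leq V(g)$ and the trivial bound $\e[|g(X_T) - g(X_T^{(n)})|^r] \leq (2V(g))^r$ can be absorbed into $C$ by enlarging it.

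No step is conceptually demanding; the whole proof is a bookkeeping exercise, and there is no genuine obstacle. The mild care needed is only in arguing that the constant $C$ is uniform in $n$, which is immediate because every structural constant in Assumption \ref{Ass_n} is independent of $n$.
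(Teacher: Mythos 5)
Your proof is correct and takes exactly the route the paper intends: Section~4's corollaries are stated as immediate consequences of the main theorems under the relabeling $\hat X=X^{(n)}$, $\hat b=b_n$, $\hat\sigma=\sigma_n$, and you correctly observe that the constant in Corollary~\ref{Main_5} depends only on $K,L,T,r,\alpha,\lambda,x_0$ and is therefore uniform in $n$. Your extra care with the $\alpha=0$ case (where $A$-$(1)$ additionally requires $1/\log(1/\varepsilon_1)<1$, not just $\varepsilon_1<1$) is a legitimate point that the paper glosses over, and your fix — centering $g$ so that $\|g\|_\infty\le V(g)$ and absorbing the trivial bound over the finitely many indices with $\varepsilon_{1,n}\in[1/e,1)$ into $C$ — is sound, modulo a harmless slip of phrasing (you want that $\sqrt{\log(1/\varepsilon_{1,n})}$ is bounded \emph{above} by $1$ on that range, so that the displayed right-hand side is bounded \emph{below} by $V(g)^rC$).
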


The next proposition shows that there exist the sequences $(b_n)_{n\in \n}$ and $(\sigma_n)_{n\in \n}$ satisfying Assumption \ref{Ass_n}.

\begin {Prop} \label{constructions}
(i) Assume $\sup_{x \in \n} |b(x)| \leq K$.
If the set of discontinuity points of $b$ is null a set with respect to the Lebesgue measure, then there exists a differentiable and bounded sequence $(b_n)_{n \in \n}$ such that for any $p \geq 1$,
\begin{align}\label{converge_b_n} 
  \int_{\real} |b(x) - b_n(x) |^p e^{-\frac{|x-x_0|^2}{2(8\lambda)T}} dx \to 0
\end{align}
as $n \to \infty$.
Moreover, if $b$ is a one-sided Lipschitz function, we can construct an explicit sequence $(b_n)_{n\in \n}$ which satisfies a one-sided Lipschitz condition.

(ii) If the diffusion coefficient $\sigma$ satisfies $A'$-(ii) and $A'$-(iii), then there exists a differentiable sequence $(\sigma_n)_{n \in \n}$ such that for any $n\in \n$, $\sigma_n$ satisfies $A'$-(iii), $A'$-(iv) and for any $p \geq 1$,
\begin{align*} 
      \int_{\real} |\sigma(x) - \sigma_n(x) |^{2p} e^{-\frac{|x-x_0|^2}{2(8\lambda)T}} dx \leq \frac{4K^{2p}\sqrt{\pi \lambda T}}{n^{2p \eta}}.
\end{align*}
\end{Prop}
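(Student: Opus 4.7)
The plan is to use a standard mollifier construction for both parts. Fix a symmetric nonnegative $\rho \in C^\infty_c(\real)$ with $\mathrm{supp}\,\rho \subset [-1,1]$ and $\int_{\real} \rho = 1$; set $\rho_n(x) := n\rho(nx)$, so that $\mathrm{supp}\,\rho_n \subset [-1/n, 1/n]$, and define $b_n := b \ast \rho_n$ in part (i) and $\sigma_n := \sigma \ast \rho_n$ in part (ii). Both sequences are automatically $C^\infty$, hence differentiable, and since $\rho_n$ is a probability density, the sup bounds pass through the convolution, giving $|b_n|, |\sigma_n| \leq K$.

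For part (i), standard convolution facts give $b_n(x) \to b(x)$ at every continuity point of $b$; since the discontinuity set of $b$ is Lebesgue-null by hypothesis, this holds a.e. The integrand $|b - b_n|^p e^{-|x-x_0|^2/(16\lambda T)}$ is dominated by the integrable function $(2K)^p e^{-|x-x_0|^2/(16\lambda T)}$, so dominated convergence yields (\ref{converge_b_n}). When $b \in \mathcal{L}$ with constant $L$, the one-sided Lipschitz property passes through the convolution: for any $x,y \in \real$,
\begin{align*}
(x-y)(b_n(x) - b_n(y))
&= \int_{\real} \bigl((x-z)-(y-z)\bigr)\bigl(b(x-z) - b(y-z)\bigr) \rho_n(z)\,dz \\
&\leq L|x-y|^2 \int_{\real} \rho_n(z)\,dz = L|x-y|^2,
\end{align*}
so $b_n \in \mathcal{L}$ with the same constant, giving the explicit sequence claimed.

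For part (ii), the $\eta$-H\"older estimate transfers by the same argument: $|\sigma_n(x) - \sigma_n(y)| \leq \int_{\real} |\sigma(x-z) - \sigma(y-z)| \rho_n(z)\,dz \leq K|x-y|^\eta$, verifying $A'$-(iii). For uniform ellipticity $A'$-(iv), observe that $\sigma$ is continuous (being H\"older) and satisfies $\sigma^2 \geq \lambda^{-1} > 0$, so $\sigma$ is nowhere zero and therefore of constant sign; assuming without loss of generality $\sigma > 0$, we have $\sigma(x) \in [\lambda^{-1/2}, \lambda^{1/2}]$ for all $x$, and averaging preserves this interval, yielding $\sigma_n^2 \in [\lambda^{-1}, \lambda]$. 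Finally, H\"older continuity combined with $\mathrm{supp}\,\rho_n \subset [-1/n, 1/n]$ gives the uniform bound
\begin{align*}
|\sigma(x) - \sigma_n(x)|
\leq \int_{\real} |\sigma(x) - \sigma(x-z)| \rho_n(z)\,dz
\leq K \int_{\real} |z|^{\eta} \rho_n(z)\,dz
\leq \frac{K}{n^{\eta}}.
\end{align*}
Raising this to the $2p$-th power and integrating against the Gaussian weight, using $\int_{\real} e^{-|x-x_0|^2/(16\lambda T)}\,dx = 4\sqrt{\pi \lambda T}$, yields the stated bound $4K^{2p}\sqrt{\pi \lambda T}/n^{2p\eta}$.

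The only mildly subtle point is the sign observation for $\sigma$ in part (ii), which is what makes the ellipticity constant $\lambda$ pass through the convolution unchanged; everything else is routine mollifier bookkeeping and dominated convergence.
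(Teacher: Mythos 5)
Your proof is correct and follows the same mollification strategy as the paper, which uses the same bump-function kernel $\rho_n(x)=n\rho(nx)$; your version of part (i) is in fact slightly streamlined, replacing the paper's Jensen-plus-change-of-variables step and double dominated convergence with a single DCT applied to $b_n\to b$ a.e. You also patch a small gap the paper glosses over: the lower ellipticity bound $\sigma_n^2\geq\lambda^{-1}$ needs the observation that $\sigma$, being continuous and nonvanishing, has constant sign on $\real$, so that the convolution average stays in $[\lambda^{-1/2},\lambda^{1/2}]$; the paper simply asserts $\lambda^{-1}\leq a_n(x)\leq\lambda$ without justification.
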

\begin{proof}
Let $\rho(x) := \mu e^{-1/(1-|x|^2)} {\bf 1}({|x|< 1})$ with $\mu^{-1} = \int_{|x|<1} e^{-1/(1-|x|^2)}dx$ and a sequence $(\rho_n)_{n \in \n}$ be defined by $\rho_n(x) := n \rho(nx)$. 
We set $b_n(x):=\int_{\real}b(y)\rho_n(x-y)dy$ and $\sigma_n(x):=\int_{\real}\sigma(y)\rho_n(x-y)dy$.
Then for any $n\in \n$ and $x \in \real$, $ |b_n(x)| \leq K$ and $\lambda^{-1} \leq a_n(x):=\sigma_n^2 (x) \leq \lambda$, $b_n$ and $\sigma_n$ are differentiable.

Proof of (i).
From Jensen's inequality, we have
\begin{align*}
      \int_{\real}|b(x)-b_n(x)|^p e^{-\frac{|x-x_0|^2}{2(8\lambda)T}}dx
&\leq \int_{\real}dx \left( \int_{\real}dy|b(x)-b(y)|\rho_n(x-y) \right) ^pe^{-\frac{|x-x_0|^2}{2(8\lambda)T}}\\
&=     \int_{\real}dx \left( \int_{|z|< 1}dz|b(x)-b(x-z/n)|\rho(z) \right)^pe^{-\frac{|x-x_0|^2}{2(8\lambda)T}}\\
&\leq  \int_{|z|< 1} dz\int_{\real}dx|b(x)-b(x-z/n)|^p e^{-\frac{|x-x_0|^2}{2(8\lambda)T}}\rho(z).
\end{align*}
Since $b$ is bounded, we have
\begin{align}\label{bdd_b_n}
     \int_{\real}|b(x)-b(x-z/n)|^p e^{-\frac{|x-x_0|^2}{2(8\lambda)T}} dx
\leq (2K)^p \int_{\real} e^{-\frac{|x-x_0|^2}{2(8\lambda)T}} dx
\leq 2^{p+2}K^{p} \sqrt{\pi \lambda T}.
\end{align}
Since the set of discontinuity points of $b$ is a null set with respect to the Lebesgue measure, $b$ is continuous almost everywhere. 
From (\ref{bdd_b_n}), using the dominated convergence theorem, we have 
\begin{align*}
      \int_{\real}|b(x)-b(x-z/n)|^p e^{-\frac{|x-x_0|^2}{2(8\lambda)T}}dx \to 0
\end{align*}
as $n \to \infty$.
From this fact and the dominated convergence theorem, $(b_n)_{n\in \n}$ satisfies (\ref{converge_b_n}).
Let $b$ be a one-sided Lipschitz function.
Then, we have
\begin{align*}
      &(x-y) (b_n(x) - b_n(y))
= \int_{\real} (x-y) (b(x-z) - b(y-z)) \rho_n(z)dz \\
&= \int_{\real} \{ (x-z)-(z-y) \} (b(x-z) - b(y-z)) \rho_n(z)dz
\leq L|x-y|^2,
\end{align*}
which implies that $(b_n)_{n\in \n}$ satisfies a one-sided Lipschitz condition.

Proof of (ii).
In the same way as in the proof of (i), we have from H\"older continuity of $\sigma$
\begin{align*}
      &\int_{\real}|\sigma(x)-\sigma_n(x)|^{2p}e^{-\frac{|x-x_0|^2}{2(8\lambda)T}}dx
\leq \int_{|z|< 1} dz\int_{\real}dx|\sigma(x)-\sigma(x-z/n)|^{2p} e^{-\frac{|x-x_0|^2}{2(8\lambda)T}}	\rho(z) \\
&\leq \frac{K^{2p}}{n^{2 p \eta}} \int_{|z|< 1} dz\int_{\real}dx e^{-\frac{|x-x_0|^2}{2(8\lambda)T}}	\rho(z) 
=  \frac{4K^{2p} \sqrt{\pi \lambda T}}{n^{2 p \eta}}.
\end{align*}
Finally, we show that $\sigma_n$ is $\eta$-H\"older continuous.
For any $x, y \in \real$, 
\begin{align*}
      |\sigma_n(x) - \sigma_n(y)|
 \leq \int_{\real} |\sigma(x-z)- \sigma(y-z) |\rho_n (z) dz
 \leq K|x-y|^{\eta},
\end{align*}
which implies that $\sigma_n$ is $\eta$-H\"older continuous.
This concludes that $(\sigma_n)_{n \in \n}$ satisfies (ii).
\end{proof}

\textbf{Acknowledgment.}
The author is very grateful to Professor Arturo Kohatsu-Higa for his supports and fruitful discussions.
The author would also like to thank Hideyuki Tanaka, Tomonori Nakatsu, Libo Li and Takahiro Tsuchiya for their useful advices.
The author would like to express my thanks to Professor Toshio Yamada for his encouragement and comments.
Finally, the author expresses my thanks to our Laboratory members for good discussions.

\end{document}